\renewcommand\hat{\widehat}
\def\bint{{\ifinner\rlap{\bf\kern.30em--}
\int\else\rlap{\bf\kern.35em--}\int\fi}\ignorespaces}
\def\sbint{{\ifinner\rlap{\bf\kern.32em--}
\hspace{0.078cm}\int\else\rlap{\bf\kern.45em--}\int\fi}\ignorespaces}
\newtheorem{theorem}{Theorem}[section]
\newtheorem{lemma}[theorem]{Lemma}
\theoremstyle{definition}
\newtheorem{definition}[theorem]{Definition}
\numberwithin{equation}{section}
\numberwithin{equation}{section}
\numberwithin{equation}{section}
\begin{document}

\arraycolsep=1pt
\title{\Large\bf  Gevrey regularity solution for initial data in Triebel-Lizorkin-Lorentz spaces via single norm defined by nonlinearity of frequency
\footnotetext{\hspace{-0.15cm}
\endgraf $^\#$\,Corresponding author
}}

\author{ Qixiang Yang$^{1}$, Hongwei Li$^{1,\#}$}
\date{  }
\maketitle

\vspace{-0.8cm}

\begin{center}
\begin{minipage}{13cm}\small
\begin{center}
\emph{ $^1$\,School of Mathematics and Statistics, Wuhan University, Wuhan }430072\emph{, China}
\end{center}

\begin{center}
\emph{ Email: qxyang@whu.edu.cn., }3368554687\emph{@qq.com}
\end{center}

\hrule
\vspace{\baselineskip}
{\noindent{\bf Abstract} \quad
The properties of solutions to Navier-Stokes equations, including well-posedness and Gevrey regularity, are a class of highly interesting problems.
We are inspired by the location result on Triebel-Lizorkin-Lorentz space of Hobus and Saal in 2019.
In order to overcome the difficulties they encountered when dealing with global well-posedness, we introduce the single norm iterative space ${^{m'}_{m} \dot{F}}^{\frac{n}{p}-1,q}_{p,r}$ and utilize tools such as the Fefferman-Stein inequality to investigate the properties of our iterative spaces. 
As a result, we establish the global well-posedness of Navier-Stokes equations in critical Triebel-Lizorkin-Lorentz space and obtain the Gevrey regularity of the mild solution.
Regarding that there're many regularity studies focused on Besov spaces, such as Bae-Biswas-Tadmor(2012) and Liu-Zhang (2024),
our Triebel-Lizorkin-Lorentz spaces contain more general initial value spaces, including part of Besov spaces and all of Triebel-Lizorkin spaces, etc..
Furthermore, compared with Germain-Pavlović-Staffilani (2007), our Gevrey estimation also implies spatial analyticity and is more convenient to unify the estimates of gradient of any order.

}

\vspace{\baselineskip}
{\bf Keywords}\quad 
Navier-Stokes equations, Triebel-Lizorkin-Lorentz space,  Global well-posedness, Meyer wavelets, Gevrey regularity, Single norm
\vspace{\baselineskip}

{\bf MSC(2020)} \quad 35Q30,76D03,42B35,46E30
\vspace{\baselineskip}
\hrule
\end{minipage}
\end{center}




\section{Motivations and Main Theorem}\label{intro}
\par  For $n\geq2$,
consider the Cauchy problem of the Navier-Stokes equations on half-space $\mathbb{R}^{1+n}_+=(0,\infty)\times\mathbb{R}^{n}$:
\begin{equation}\label{NS}
\begin{cases}
\partial_tu-\Delta u+(u\cdot\nabla) u-\nabla p=0,  & \texttt{ in } \;\mathbb{R}^{1+n}_+\,;\\
\nabla\cdot u=0,  & \texttt{ in } \;\mathbb{R}^{1+n}_+\,;\\
u\big|_{t=0}=f,   & \texttt{ in } \;\mathbb{R}^{n}\,.
\end{cases}
\end{equation}
Denote Leray projector by $\mathbb{P}$. The divergence zero gives
\begin{equation}\label{NS1}
\begin{cases}
\partial_tu-\Delta u+\mathbb{P}\nabla\cdot(u\otimes u)=0,  & \texttt{ in } \;\mathbb{R}^{1+n}_+\,;\\
\nabla\cdot u=0,  & \texttt{ in } \;\mathbb{R}^{1+n}_+\,;\\
u\big|_{t=0}=f,   & \texttt{ in } \;\mathbb{R}^{n}\,.
\end{cases}
\end{equation}
\par We write (\ref{NS1}) in the form of integral equation by Duhamel's integral:
\begin{equation}\label{NS2}
u(t,x)=e^{t\Delta}f(x)-B(u,u)(t,x)\,,
\end{equation}
where the $e^{t\Delta}$ stands for the heat semigroup and $B(u,v)$ is a bilinear operator:
\begin{equation*}
\begin{split}
\widehat{e^{t\Delta}f}(\xi)=&e^{-t\lvert \xi\rvert^2}\hat{f}(\xi)\,;\\
B(u,u)(t,x)=& \int_{0}^{t} e^{(t-s)\Delta}\mathbb {P}\nabla(u\otimes u)ds\,.
\end{split}
\end{equation*}
The mild solution of Navier-Stokes equations was first introduced by T. Kato and H. Fujita in 1962.
Kato-Fujita \cite{KF} considered the well-posedness of Navier-Stokes equations in $\dot{H}^{\frac12}(\mathbb{R}^3)$.
Subsequently, many scholars extended their result to treat well-posedness in a series of function spaces.
For example:
Kato \cite{K} proved well-posedness for solutions in Lebesgue space $L^3$;
Cannone-Meyer-Planchon \cite{CMP} considered in Besov spaces $\dot{B}^{-1+\frac3p,\infty}_{p}$;
Koch-Tataru \cite{KT} generalized the theory of well-posedness to $\textrm{BMO}^{-1}$, which is currently the largest initial space.
\par On the basis of well-posedness, analyticity and the following Gevrey regularity are very significant type of regularity properties of solutions.
If a mild solution $u(t,x)$ also satisfies $e^{(-t\Delta)^\gamma}u\in{ ^{m'}_{m} \dot{F}}^{\frac{n}{p}-1, q}_{p,r}$, 
where $0<\gamma\leq 1$, $e^{(-t\Delta)^\gamma}$ denotes the Fourier multiplier, and the work space ${ ^{m'}_{m} \dot{F}}^{\frac{n}{p}-1, q}_{p,r}$ mentioned will be defined later in Section \ref{sec2}, 
we call the solution of Gevrey regularity.
The solutions of Navier-Stokes equations and many other dissipative equations are actually analytic or Gevrey regular.
Masuda \cite{Ma} was the first to study the analyticity and unique continuation of the Navier-Stokes equation solutions on a bounded domain within Sobolev spaces. 
Later, Foias-Temam \cite{FT} considered periodic functions in Sobolev spaces. 
If solution belongs to their defined $D(A^{\frac12})$ at any time, they obtained the analyticity of solution in time and that the solution also belongs to the Gevrey class $D(A^{\frac12}e^{\sigma A^ {\frac 12}})$. 
In the same year, Giga-Miyakawa \cite{GM} proved the well-posedness and smoothness of global-in-time solution with sufficiently small initial data in Morrey spaces.
Grujic-Kukavica \cite{GK} considered the spatial analyticity of the initial values in the $L_p$ spaces, and they studied the Navier-Stokes equations with complex variables. Biswas-Swanson \cite{BS} proved the analyticity of Fourier coefficients of the periodic solutions in $l_p$ spaces. 
Germain-Pavlović-Staffilani \cite{GPS} focused on small initial values in $\textrm{BMO}^{-1}$ and established the spatial analyticity of solutions through the step-by-step improvement method, strengthening the pointwise regularity of the Koch-Tataru solution in \cite{KT}. 
Dong-Li \cite{DL}, Miura-Sawada \cite{MS} and \cite{GPS} all studied the spatial analyticity by estimating the derivatives. 
Bae-Biswas-Tadmor \cite{BBT} studied the Gevrey regularity in the Besov space $\dot{B} ^{-1+\frac{3}{p},q}_{p}(\mathbb{R}^3)$ under the framework of small initial values 
by Fourier multiplier $e^{t^{\frac12}\Lambda}$, where $e^{t^{\frac12}\Lambda}$ is the multiplier corresponding to $e^{t^{\frac 12}\sum_{i=1}^3|\xi_i|}$. 
Yang \cite{Yang} explored the existence of uniformly analytic solutions in Fourier transform space (Fourier-Herz space). 
One year later, Lou-Yang-He-He \cite{LYHH} presented the existence of uniform analytic solution of fractional Navier-Stokes equations in critical Fourier-Herz space.
Recently, Liu-Zhang \cite{LZ} investigated the global existence of analytical solution of anisotropic Navier-Stokes system within the framework of Besov type functions. 
Here the anisotropic Navier-Stokes system stems from the original Navier-Stokes equations posed on periodic spatial domain. 
They also obtained the precise analytical radius.
These studies, starting from different spaces and methods, gradually deepened the understanding of analyticity theory of the Navier-Stokes equations.

Most of the above studies of Navier-Stokes equations were conducted within the framework of Besov spaces. 
When Fefferman proposed the millennium problem in \cite{Fe},
he emphasized the results on the sigular set of weak solutions of the Navier-Stokes equations.
Caffarelli-Kohn-Nirenberg \cite{CKN} and Lin \cite{Lin} have proved that
the parabolic analogue of Hausdorff dimension of the sigular set sets equal to zero for some suitable weak solutions.
Enlightened by them, we try to control the set of large value points.
Lorentz type spaces, which reflect the distribution of large value points, have a strong connection with blow-up phenomeon.
In view of the great significance of blow-up phenomenon in the regularity theory, 
it is an interesting problem how to establish well-posedness and other properties in the Lorentz type spaces.
Barraza \cite{B} and Meyer-Coifman \cite{MC} proved the global wellposedness for initial data in Lorentz spaces $L^{n,\infty}(\mathbb{R}^n)$.
Lemari\'e gave a simple proof for homogeneous Lorentz space by studying Lebesgues space on the sphere in \cite{Lem}.
Yang-Li \cite{YL} obtained the the global wellposedness in Besov-Lorentz spaces using Hardy-Littlewood maximum operator.
Hobus-Saal got the well-posedness result in Triebel-Lizorkin-Lorentz spaces in \cite{HS} in 2019.
They proved that for the given parameters, Navier-Stokes equations have a unique maximal strong solution.
However, the paper only obtained a local result and cannot extend it to the global case, mainly due to the following reasons: 
Firstly, the global well-posedness relies on the multiplication result in Lemma 6.4 of their article.
It describes the continuity of pointwise multiplication in Triebel-Lizorkin-Lorentz spaces.
This multiplication theorem is not optimal.
In classical spaces such as Lebesgue spaces or Sobolev spaces, there are stronger multiplication theorems that do not require the introduction of small perturbations as in Lemma 6.4 of \cite{HS}.
In the more general Triebel-Lizorkin-Lorentz spaces, the optimal multiplication theorem is not yet available.
Secondly, Hobus and Saal proved the Laplace and the Stokes operator to admit a bounded $H^{\infty}$-calculus and applied it to constructed the maximal strong solution in their work.
Critical spaces are usually key to the global well-posedness,
yet they are rarely involved in \cite{HS}, making it difficult to handle the global estimation.
Overall, Hobus and Saal opened up the first step in the theory of Triebel-Lizorkin-Lorentz spaces for Navier-Stokes equations,
and also left us with the important problem how to consider global properties of the critical Triebel-Lizorkin-Lorentz spaces.

To overcome the above difficulties, we introduce the critical single norm iterative spaces ${^{m'}_{m} \dot{F}}^{\frac{n}{p}-1,q}_{p,r}$ in Section \ref{sec2} via the method of frequency decomposition.
Further, combining the properties of Triebel-Lizorkin-Lorentz spaces, we utilize tools such as the Fefferman-Stein inequality to investigate the properties of our iterative spaces ${ ^{m'}_{m} \dot{F}}^{\frac{n}{p}-1, q}_{p, r}$, which are constructed on the basis of Triebel-Lizorkin-Lorentz spaces.
Through these approaches, first we establish the global well-posedness in critical Triebel-Lizorkin-Lorentz space $\dot{F}^{\frac{n}{p}-1, q}_{p,r}$ absent in Hobus-Saal \cite{HS}.
Moreover, noting that there are relatively few studies on the regularity of solutions in the Lorentz type spaces, 
we achieve the Gevrey regularity result of the global solution.

\begin{theorem}\label{mthmain} (i) Given $1<p,r<\infty$, $m>1$, $1\leq q< \infty$ and $ 0\leq m'<\frac{1}{2}$, resp., $q=\infty$ and $0<m'<\frac12$.
(\ref{NS}) have a unique mild solution in
$({^{m'}_{m} \dot{F}}^{\frac{n}{p}-1,q}_{p,r} )^n$ for all initial data $f(x)$ with $\nabla \cdot f =0$ and $\|f\|_{(\dot{F}^{\frac{n}{p}-1,q}_{p,r})^n}$ small enough.

(ii) Given $  1< r< \infty,n< p< \infty ,m>1-\frac n{2p}$, $0<\gamma<\min\{\frac {m}{2n+2}-\frac 1{4n+4}+\frac n{8pn+8p},\frac 1{4n+4}-\frac n{4pn+4p},\frac m{6n+6}\}$. If the index $(q,m')$ satisfies $$1\leq q<\infty\;\; and \;\; 0\leq m'<\frac12-\frac n{4p}$$ or $$q=\infty \;\;and\;\; 0< m'<\frac12-\frac n{4p},$$
(\ref{NS}) have a unique solution $u(t,x)$ such that $e^{(-t\Delta)^\gamma}u\in({ ^{m'}_{m} \dot{F}}^{\frac{n}{p}-1, q}_{p, r})^n$ for all initial data $u_0(x)$ with $\nabla \cdot u_0 =0$ and $\|u_0\|_{(\dot{F}^{\frac{n}{p}-1,q}_{p,r})^n}$ small enough.
\end{theorem}

Since the proof process of (i) in this theorem is contained in that of (ii), but the proof of regularity theorem is much more complex, here we omit the former.


Although there exist results investigated in spaces based on the Fourier transform, such as \cite{Yang}, which considered the Fourier-Herz spaces.
Many significant regularity studies are carried out within Besov spaces and Lebesgue spaces, such as \cite{BBT} and \cite{LZ}.
Triebel-Lizorkin-Lorentz spaces $\dot{F}^{s,q}_{p,r}$
can be regarded as the real interpolation space of Triebel-Lizorkin space according to \cite{T2} and \cite{YCP}.
They unify a wide array of classical spaces, including Lebesgue spaces $\dot{L}^{n}=\dot{F}^{0,2}_{n,n}$, 
Lorentz spaces $\dot{L}^{n,q}=\dot{F}^{0,2}_{n,q}$, 
Sobolev spaces $\dot{W}^{\frac{n}{p}-1, p}= \dot{F} ^{\frac{n}{p}-1, 2}_{p,p}$
and  Triebel-Lizorkin spaces $\dot{F}^{\frac{n}{p}-1,q}_{p}= \dot{F}^{\frac{n}{p}-1,q}_{p,p}$. 
Our regularity results generalizes the Gevrey estimation of Bae-Biswas-Tadmor \cite{BBT} and extends the regularity studies to the broader framework of Triebel-Lizorkin-Lorentz spaces. 
Furthermore, as mentioned above, Germain-Pavlović-Staffilani \cite{GPS} modified the Koch-Tataru's result to prove that solutions in $\text{BMO}^{-1}$ satisfy derivative estimates, which implies spatial analyticity.
They established regularity for $\text{BMO}^{-1}$, which is the largest known initial space so far, yet their proof is tedious as it proceeds order by order.
In contrast, our Gevrey estimation also contains spatial analyticity since we employs the Fourier multiplier $e^{(-t\Delta)^\gamma}$ to directly capture exponential decay in the frequency domain.
Additionally, Gevrey regularity is more convenient to unify the estimates of gradient of any order due to the rapid attenuation of exponential term.


In the proof of the well-posedness on Besov-Lorentz spaces in \cite{YL}, 
we adopted the classic method of decomposing products into paraproduct flow and coupled flow. 
Here, to ensure that the support of the fourier transform of wavelet functions does not contain the origin point, 
which will be applied when considering priori estimates in Section \ref{sec5}.
Let $Q_{j}v$ be the quantities defined in (\ref{Q}) of Section \ref{mainproof}.
We employ the following decomposition instead:
\begin{equation}\label{eq:decompose}
\begin{array}{rl}
u(t,x)v(t,x)=&\sum_{j-j'\leq-3}Q_juQ_{j'}v+\sum_{\lvert j-j^{\prime}\rvert\leq2}Q_juQ_{j^{\prime}}v+\sum_{j-j'\geq3}Q_juQ_{j'}v.
\end{array}
\end{equation}

\par An outline of this paper is as follows:
In Section \ref{sec2}, we introduce some preliminaries:
Meyer wavelets, Triebel-Lizorkin-Lorentz space, some estimates of maximum operators, and introduce the critical work space.
In Section \ref{sec3}, we discuss some properties for our work space and establish the link between Triebel-Lizorkin-Lorentz spaces and the work space.
In the Section \ref{mainproof}, we transform the proof of our main Theorem to the Theorem \ref{bili}.
Finally,  in Sections \ref{sec5}, \ref{sec6} and \ref{sec7}
we prove Theorem \ref{bili} and establish the Gevrey estimation for small initial data in Triebel-Lizorkin-Lorentz spaces.



\section{Preliminaries}\label{sec2}

In this section, we introduce some preliminary knowledge
relative to Triebel-Lizorkin-Lorentz spaces, wavelets
and the work space ${ ^{m'}_{m} \dot{F}}^{\frac{n}{p}-1, q}_{p, r}$ defined by single norm.
At the end of this section, we present some basic inequalities.

\subsection{Triebel-Lizorkin-Lorentz spaces and Meyer wavelets}

Let $\left\{\varphi_j\right\}_{-\infty}^{\infty}$  be function sequence in $\mathscr{S}\left(\mathbb{R}^n\right)$ that satisfies the following properties.

$$
\begin{gathered}
\operatorname{supp} \hat{\varphi}_j \subset\left\{\xi: 2^{j-1} \leqslant|\xi|\leq2^{j}\right\}, \quad\forall  j\in\mathbb{Z}, \\
|\hat{\varphi}_j(\xi)|\geq C>0, \quad  \; \frac35 \leqslant 2^{-j}|\xi|\leq\frac53, \\
2^{j|\alpha|}\left|D^\alpha \hat{\varphi}_j(\xi)\right| \leqslant C_\alpha, \quad j\in\mathbb{Z}, \quad \forall \xi \in \mathbb{R}^n,\alpha\in\mathbb{N}^n, \\
0< C_1\leq \sum_{j\in\mathbb{Z}} \hat{\varphi}_j(\xi)\leq C_2, \quad \forall \xi \in \mathbb{R}^n .
\end{gathered}
$$

\begin{definition}

{\rm(i)} Given $s\in \mathbb{R}$, $1<p, r< \infty$, $1\leq q\leq \infty$.
$f(x)\in \dot{F}^{s,q}_{p,r} = L^{p,r} (l^{s,q})\Longleftrightarrow$
$$\begin{array}{c}
\sum_{u\in\mathbb{Z}}2^{ur}|\{\sum_{v\in\mathbb{Z}}2^{qv s}|\varphi_v * f|^q>2^{qu}\}|^{\frac{r}{p}}<\infty.
\end{array}$$

{\rm(ii)} Given $s\in \mathbb{R}$ and $1\leq p,q,r\leq \infty$.
$f(x)\in \dot{B}^{s,q}_{p,r} = l^{s,q}(L^{p,r})\Longleftrightarrow$
$$\begin{array}{c}
\sum_{v\in\mathbb{Z}}2^{vqs}(\sum_{u\in\mathbb{Z}}2^{pu}|\{|\varphi_v * f|>2^{u}\}|^{\frac{p}{r}})^{\frac{q}{p}}<\infty.
\end{array}$$

\end{definition}

Let  $g_{s,q} = (\sum_{v\in\mathbb{Z}}2^{qv s}|\varphi_v * f|^q)^{\frac 1q}$. This type of function adopts pointwise control over $f$.
The distribution function restricts the measure of large value points.
 
Further, we note that $g_{s, q}$ is the function obtained by taking $l^{s,q}$ norm to the sequence $\left\{\varphi_j*f\right\}_{-\infty}^{\infty}$.
It is controlled by an Lorentz integral.
This complicates the estimation as we have to handle the distribution of $g_{s, q}$.
It involves pointwise estimation of the maximal function. 
However, we first take the Lorentz space norm when considering Besov Lorentz spaces. Each frequency corresponds to a scalar and no pointwise estimation is required in this case.
 
On the other hand, when we deal with the estimation of Triebel-Lizorkin-Lorentz spaces, we take the $L^{p,r}$ norm by applying the decreasing rearrangement function to $ g_{s,q}$.
Large value points accumulate near the origin after taking the rearrangement function. 
The smaller $r$ is, the more sensitive estimation is to dense large value points, which forces large value points to be sparse.
When $r$ becomes larger, it focuses more on the global supremum.
In the case of Besov-Lorentz spaces, we take the Lorentz index to the convolution sequence, ignoring the pointwise details of $g_{s,q}$.
Therefore, Besov-Lorentz spaces only reflects the average distribution of large value points.
 
In general, the control of Triebel-Lizorkin-Lorentz spaces is more complex, but it captures the distribution of large value points more precisely.
It is more sensitive than the average distribution in the case of Besov-Lorentz spaces and more suitable for the singularity analysis of differential equations.

In addition, Triebel-Lizorkin-Lorentz spaces are the real interpolation spaces of Triebel-Lizorkin spaces.
The introduction of them gives a uniform characterization of many function spaces.
For example, Triebel-Lizorkin-Lorentz spaces cover Bessel-potential spaces, Sobolev-Slobodeckij spaces, etc.
One can find their more properties in \cite{P}, \cite{T1}, \cite{T2} and \cite{YCP}.
Triebel-Lizorkin-Lorentz spaces also have been studied in \cite{HS}, \cite{T1} and \cite{YYH}.
In this paper, we consider also some more properties of this kind of spaces in Sections \ref{sec3}, \ref{sec6} and \ref{sec7} and
apply them to the Gevrey regularity in Section \ref{mainproof}.

Since the discussion on Triebel-Lizorkin-Lorentz spaces is conducted based on their wavelet characterizations, now we introduce Meyer wavelets.
We refer the reader to \cite{Me}, \cite{W}  and \cite{Y} for further information about wavelets. 
Set $E_n=\{0,1\}^n\backslash\{0\}$,
$\Gamma_n=\{(\epsilon,k): \epsilon\in E_n,k\in\mathbb{Z}^n\}$
and $\Lambda_n=\{(\epsilon,j,k): \epsilon\in E_n, j\in \mathbb{Z}, k\in\mathbb{Z}^n\}$.
Let $\phi^0(\xi)$ be a even function in $C^{\infty}_0([-\frac{4\pi}3,\frac{4\pi}3])$ satisfying that
$$\left\{
\begin{aligned}
\phi^0(\xi)=1\; &\; ,\lvert \xi\rvert\leq\frac{2\pi}3;\\ 
0\leq\phi^0(\xi)&\leq1.
\end{aligned}
\right.$$
Set $\varphi(\xi)=[(\phi^0(\frac \xi2))^2-(\phi^0(\xi))^2]^{\frac12}$ and $\phi^1(\xi)=e^{-\frac{i\xi}2}\varphi(\xi)$.
Thus we can get 
$$\left\{
\begin{aligned}
\varphi(\xi)=0\; &\; ,\lvert \xi\rvert\leq\frac{2\pi}3;\\ 
\varphi^2(\xi)+\varphi^2(2\pi-\xi)=1\; &\; ,\frac{2\pi}3\leq \xi\leq\frac{4\pi}3.
\end{aligned}
\right.$$
For arbitrary $\epsilon=(\epsilon_1,\epsilon_2,...,\epsilon_n)\in E_n$,
let $\phi^{\epsilon}(x)$ be the function defined by Fourier transform $\hat\phi^{\epsilon}(\xi)=\Pi_{i=1}^n\phi^{\epsilon_i}(\xi_i)$.
For any $k\in\mathbb{Z}^n$, $j\in\mathbb{Z}$, set $\phi^{\epsilon}_{j,k}(x)=2^{\frac{nj}2}\phi^{\epsilon}(2^jx-k)$.
For the convenience of the later definitions and proofs,
we use $\{\phi^{\epsilon}_{j,k}(x)\}_{(\epsilon,j,k)\in\Lambda_n}$ to denote Meyer wavelets in the rest of this article.

For all $\epsilon\in \{0,1\}^{n}, j\in \mathbb{Z}, k\in \mathbb{Z}^{n}$ and distribution $f(x)$, denote
$f^{\epsilon}_{j,k}=\langle f, \phi^{\epsilon}_{j,k}\rangle$, which we call wavelet coefficients.
Using Meyer wavelets we can characterize $L^2(\mathbb{R}^n)$ in the following way:
\begin{lemma}
The Meyer wavelets make up an orthogonal basis of $L^2(\mathbb{R}^n)$.
Further, for any function $f(x)\in L^2(\mathbb{R}^n)$,
$f(x)=\sum_{(\epsilon,j,k)\in\Lambda_n}f^{\epsilon}_{j,k}\phi^{\epsilon}_{j,k}(x)$ in the $L^2$ convergence sense.
\end{lemma}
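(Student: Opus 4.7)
The plan is to prove the lemma via the standard multiresolution analysis (MRA) approach, using the specific algebraic identities that the authors have built into the construction of $\varphi$. I will first establish the one-dimensional case from the building blocks $\phi^0$ and $\phi^1$, then lift to $\mathbb{R}^n$ via tensor products.

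First, I would set $V_j = \overline{\text{span}}\{\phi^0_{j,k}\}_{k \in \mathbb{Z}}$ in the one-dimensional case and verify the MRA axioms. The inclusion $V_j \subset V_{j+1}$ follows from the two-scale relation $\widehat{\phi^0}(\xi) = m_0(\xi/2)\widehat{\phi^0}(\xi/2)$ for a $2\pi$-periodic symbol $m_0$, which exists because $\phi^0(\xi/2) = 1$ on the support of $\phi^0(\xi)$. The orthonormality of the integer translates $\{\phi^0(\cdot - k)\}_k$ reduces by Plancherel and Poisson summation to showing $\sum_{k \in \mathbb{Z}}|\widehat{\phi^0}(\xi + 2\pi k)|^2 = 1$; since $\widehat{\phi^0}$ is supported in $[-4\pi/3,4\pi/3]$, only the $k=0,\pm 1$ terms contribute, and the identity $\varphi^2(\xi)+\varphi^2(2\pi-\xi)=1$ on the transition zone $2\pi/3 \leq |\xi| \leq 4\pi/3$ — combined with the fact that $\phi^0 \equiv 1$ on the inner zone — gives exactly the required partition. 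Density $\overline{\bigcup V_j} = L^2$ and triviality $\bigcap V_j = \{0\}$ follow from the standard dilation/approximation-of-identity arguments because $\widehat{\phi^0}(0) = 1$.

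Next I would identify the wavelet space. Defining $W_j$ as the orthogonal complement of $V_j$ in $V_{j+1}$, I will show that $\{\phi^1_{j,k}\}_{k\in\mathbb{Z}}$ is an ONB of $W_j$. The key is a direct Fourier-side computation: orthonormality $\langle \phi^1(\cdot-k), \phi^1(\cdot-k')\rangle = \delta_{k,k'}$ follows from $\sum_k |\widehat{\phi^1}(\xi+2\pi k)|^2 = \sum_k \varphi^2(\xi+2\pi k) = 1$, again using the key identity. Orthogonality of $\phi^1$ against $V_0$ reduces to a pointwise identity between $\varphi$ and $\widehat{\phi^0}$, where the phase factor $e^{-i\xi/2}$ in the definition $\phi^1(\xi) = e^{-i\xi/2}\varphi(\xi)$ plays the role of the standard Meyer alternating-sign trick $(-1)^k$. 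The telescoping $L^2(\mathbb{R}) = \bigoplus_j W_j$ then gives the one-dimensional ONB.

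For the $n$-dimensional case, I would use the tensor product construction: since $\widehat{\phi^\epsilon}(\xi) = \prod_i \widehat{\phi^{\epsilon_i}}(\xi_i)$, the identity $V_{j+1}^{\otimes n} = V_j^{\otimes n} \oplus \bigoplus_{\epsilon \in E_n} W_j^\epsilon$ (with $W_j^\epsilon = \bigotimes_i U_j^{\epsilon_i}$, $U^0=V$, $U^1=W$) together with the one-dimensional ONB property yields the full orthonormal system $\{\phi^\epsilon_{j,k}\}_{(\epsilon,j,k)\in \Lambda_n}$. Convergence of the wavelet series for arbitrary $f \in L^2$ is then just Parseval in the ONB.

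The main technical obstacle is not conceptual but notational: carefully checking, on the frequency side, that the specific algebraic identity $\varphi^2(\xi) + \varphi^2(2\pi-\xi) = 1$ on $[2\pi/3,4\pi/3]$ — the one nontrivial relation distinguishing $\varphi$ from an arbitrary bump — propagates correctly through the Poisson-summation computations both for orthonormality within each $V_j$ and for orthogonality between $V_j$ and $W_j$. Since this is a classical result (see Meyer \cite{Me}), I would phrase the proof as a verification that the authors' particular choice of $\phi^0$ and $\phi^1$ satisfies the hypotheses of Meyer's construction, and cite \cite{Me}, \cite{W}, \cite{Y} for the detailed checks.
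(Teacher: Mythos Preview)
Your proposal is a correct outline of the standard MRA proof that Meyer's system is an orthonormal basis, and the reduction to the identity $\varphi^2(\xi)+\varphi^2(2\pi-\xi)=1$ is exactly the right ingredient. However, the paper does not prove this lemma at all: it is stated as a preliminary fact with implicit reference to \cite{Me}, \cite{W}, \cite{Y}, and no argument is given. So there is nothing to compare against; your write-up supplies what the paper simply quotes.
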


For $j\in \mathbb{Z}$,
denote $f_{j}(x) = 2^{\frac{n}{2}j} \sum\limits_{(\epsilon,k)\in \Gamma_n}|f^{\epsilon}_{j,k}|\chi(2^{j}x-k).$
Wavelets can characterize more general function spaces.
The following characterization can be found in \cite{Y} and \cite{YCP}.
\begin{lemma}\label{le:2.2}

{\rm(i)} Given $s\in \mathbb{R}$ and $1\leq p<\infty$,$1\leq q\leq \infty$.
$f(x)\in \dot{F}^{s,q}_{p}\Longleftrightarrow$
$$\begin{array}{c}
[\sum\limits_{\epsilon,j,k}  2^{jq(s+\frac{n}{2})}  | f^{\epsilon}_{j,k} |^{q}\chi(2^jx-k)]^{\frac1q}\in L^p.
\end{array}$$

{\rm(ii)} Given $s\in \mathbb{R}$, $1<p, r< \infty$, $1\leq q\leq \infty$.
$f(x)\in \dot{F}^{s,q}_{p,r} \Longleftrightarrow$
$$\begin{array}{c}
\sum\limits_{u}  2^{ur}  | \{x: \sum\limits_{j} 2^{jsq}|f_{j}(x)|^ q >2^{qu}\}|^{\frac{r}{p}}<\infty.
\end{array}$$
\end{lemma}

{\rm(iii)} Given $s\in \mathbb{R}$ and $1\leq p,q,r\leq \infty$.
$f(x)\in \dot{B}^{s,q}_{p,r}\Longleftrightarrow$
$$\begin{array}{c}
\sum\limits_{j}  2^{jqs}  \{ \sum\limits_{u\in \mathbb{Z}} 2^{u r} |\{x: f_{j}(x)>2^{u}\}|^{\frac{r}{p}}\}^{\frac{q}{r}}<\infty.
\end{array}$$





\subsection{Critical work space}\label{work space}

\par The scale symmetry of Navier-Stokes equations is well-konwn.
Suppose a function $u$ defined on $\mathbb{R}^{n}$ and $u(t,x)$ defined on $(0,\infty)\times\mathbb{R}^{n}$. 
Let $$u_{\theta}(\cdot)= \theta u(\theta \cdot) \;\;\mbox{and}\;\; u_{\theta}(t,x) = \theta u(\theta^{2}t, \theta x).$$
If $u(t,x)$ solves the equations \eqref{NS1}, so does the $ u_{\theta}(t,x)$ (with the corresponding initial condition $u_{\theta}(\cdot)$).
If a function space $X$ satisfies that $\Vert u_{\theta}(\cdot)\Vert_X=\Vert u(\cdot)\Vert_X$.
We call it critical space in Navier-Stokes equations. 

There are many of result works for critical spaces in Navier-Stokes equations, such as:
Cannone \cite{C}, Cannone-Wu \cite{CW}, Li-Xiao-Yang \cite{LXY}, Miura \cite{Mi} and Wu \cite{W1}, \cite{W2}, \cite{W3}.
We can figure out that when $s=\frac {n}{p} -1$, the Triebel-Lizorkin-Lorentz space $\dot{F}^{s,q}_{p,r}$ is critical space. In this paper, we consider critical spaces.
\par Before introducing our work space, we write the following notations for convenience.
For any function $f(t,x)$ defined on $(0,\infty)\times\mathbb{R}^{n}$, let $f^{\epsilon}_{j,k}(t)=\langle f(t,\cdot),\phi^{\epsilon}_{j,k}\rangle$, which is the wavelet coefficients of $f(t,x)$. 
$\forall j, j_t\in \mathbb{Z}$, denote
\begin{align*}
f_{j}(t,x) = &2^{\frac{n}{2}j} \sum\limits_{(\epsilon,k)\in \Gamma}|f^{\epsilon}_{j,k}(t)|\chi(2^{j}x-k);\\
f_{j,j_t}(x)=&\sup_{2^{-2j_t}\leq t<2^{2-2j_t}}f_{j}(t,x).
\end{align*}
For all $p,q,r\geq1$, $m,m'\geq0$,  denote

$$\begin{array}{c}
A_{r,p}^{m,q} =\sup_{j_t\in\mathbb{Z}}\sum_u2^{ur}|\{x:\sum_{j\geq j_t}2^{2(j-j_t)mq}2^{jq(\frac np-1)}(f_{j,j_t}(x))^q>2^{qu}\}|^{\frac rp};\\
A_{r,p}^{m',q} =\sup_{j_t\in\mathbb{Z}}\sum_u2^{ur}|\{x:\sum_{j< j_t}2^{2(j-j_t)m'q}2^{jq(\frac np-1)}(f_{j,j_t}(x))^q>2^{qu}\}|^{\frac rp}.

\end{array}$$


\begin{definition}\label{de:2.3} Given $1\leq p<\infty, 1\leq q,r\leq \infty, m'\geq 0,m>0.$
$f(t,x)\in { ^{m'}_{m} \dot{F}}^{\frac{n}{p}-1, q}_{p, r}$ if and only if
$$\begin{array}{c}
A_{r,p}^{m,q}+A_{r,p}^{m',q}<\infty.
\end{array}$$
\end{definition}

Here we divide ${ ^{m'}_{m} \dot{F}}^{\frac{n}{p}-1, q}_{p, r}$ into high frequency and low frequency according to the scale of $t2^{2j}$.
$m$ represents the regularity and it reflects the rapidly decreasing of the function flow for high frequency.
Different of $m'$ reflects diffrent stability of the trace function, namely the convergence around $t=0$.
$m'$ control the low frequency.
If $m'=0$, then ${ ^{0}_{m} \dot{F}}^{\frac{n}{p}-1, q}_{p, r} \subset L^{\infty} ( \dot{F}^{\frac{n}{p}-1, q}_{p, r}).$
Our single norm space improved the original work space in \cite{HS}.
Here the norm is discrete for both space variable and time.

\subsection{Basic inequalities}

The following inequality can be proved easily:

\begin{lemma}\label{le:2.4}
For $0<r\leq1$ and $a_k\geq0,\;k\in N_+$, we have
$$(\sum_{k>0}a_k)^r\leq \sum_{k>0}(a_k)^r  .$$
\end{lemma}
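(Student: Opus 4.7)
The plan is to exploit the elementary fact that for any number $y\in[0,1]$ and any exponent $r\in(0,1]$, one has $y^r\geq y$, because the function $y\mapsto y^r$ is at least as large as the identity on $[0,1]$ (equal at the endpoints $0$ and $1$, and concave in between with value $1$ at $y=1$). Normalizing the terms $a_k$ by their total sum turns each ratio into such a $y$, and this will convert the desired $r$-subadditivity into a trivial statement.

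First I would dispense with the trivial case. If $\sum_{k>0}a_k=0$, both sides are zero and there is nothing to prove, while if the sum diverges the right-hand side certainly does not decrease, so it suffices to treat $S:=\sum_{k>0}a_k\in(0,\infty)$. Next I would set $b_k:=a_k/S\in[0,1]$, so that $\sum_{k>0}b_k=1$. Applying $b_k^r\geq b_k$ termwise and summing in $k$ gives
\begin{equation*}
\sum_{k>0}b_k^{r}\;\geq\;\sum_{k>0}b_k\;=\;1.
\end{equation*}
Multiplying both sides by $S^r$ and using $S^r b_k^r=a_k^r$ yields $\sum_{k>0}a_k^{r}\geq S^{r}=(\sum_{k>0}a_k)^{r}$, which is exactly the claim.

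There is no real obstacle here; the only technical point worth flagging is convergence of the series on the right when $S=\infty$, which one can handle either by passing to partial sums first (proving the inequality for every finite subsum and then letting the number of terms tend to infinity) or by observing that if any single $a_k=\infty$ both sides equal infinity. Using partial sums is cleanest, since the argument above is uniform in the number of terms and monotone convergence then delivers the infinite version immediately.
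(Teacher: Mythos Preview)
Your proof is correct. Both arguments are short and elementary, but they are organized a bit differently. The paper substitutes $b_k=a_k^{r}$ and proves the equivalent dual inequality $\sum_k b_k^{1/r}\le(\sum_k b_k)^{1/r}$ by bounding each term via the sup norm, $b_k^{1/r}\le \|b\|_\infty^{1/r-1}\,b_k\le(\sum_j b_j)^{1/r-1}\,b_k$, and summing. You instead normalize by the total sum $S=\sum_k a_k$ and use $y^{r}\ge y$ for $y\in[0,1]$ termwise. Both reduce to the same monotonicity of power functions on $[0,1]$; your normalization avoids introducing the sup norm and is arguably slightly cleaner, while the paper's version makes the comparison with $\ell^\infty$ explicit. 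Your remark about handling $S=\infty$ via partial sums and monotone convergence is a nice addition that the paper leaves implicit.
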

\begin{proof}
For any $k>0$, let $b_k=(a_k)^r$. Then $$\sum_{k>0}a_k=\sum_{k>0}(b_k)^{\frac 1r}\leq\Vert b\Vert_{\infty}^{\frac 1r-1}\sum_{k>0}b_k\leq(\sum_{k>0}b_k)^{\frac 1r}=[\sum_{k>0}(a_k)^r]^{\frac 1r}  .$$
\end{proof}

In this paper, we have to use some estimations relative to Hardy-Littlewood maximum operator $M$.
For $\{f^{\epsilon}_{j,k}\}_{(\epsilon,j,k)\in \Lambda_n}$,
let $f_{j}(x) = 2^{\frac{n}{2}j} \sum\limits_{(\epsilon,k)\in \Gamma}|f^{\epsilon}_{j,k}|\chi(2^{j}x-k)$ and
\begin{equation*}
\begin{split}
g^k_{j,j'}=\left\{
\begin{array}{ll}
\sum_{(\epsilon',k')\in\Gamma}\frac{2^{\frac{n}{2}j'}|f^{\epsilon'}_{j',k'}|}{(1+|k'-2^{j'-j}k|)^N},&j\geq j',k\in\mathbb{Z}^n;\\
\sum_{(\epsilon',k')\in\Gamma}\frac{2^{\frac{n}{2}j'}|f^{\epsilon'}_{j',k'}|}{(1+|k-2^{j-j'}k'|)^N},&j<j',k\in\mathbb{Z}^n.
\end{array}
\right.
\end{split}
\end{equation*}

Yang \cite [pp. 87-88]{Y} has proved the following Lemma:
\begin{lemma}\label{HL max}
For $N>2n+1$ and $x\in Q_{j,k}$, we have
\begin{equation*}
\begin{split}
g^k_{j,j'}\lesssim\left\{
\begin{array}{ll}
M(f_{j'})(x),&j\geq j'\\2^{n(j'-j)}M(f_{j'})(x),&j<j'
\end{array}
\right.
\end{split}
\end{equation*}
\end{lemma}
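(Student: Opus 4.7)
The plan is to prove the estimate by decomposing the sum defining $g^k_{j,j'}$ into dyadic annuli in the index $k'$ and comparing each piece to an average of $f_{j'}$ over a Euclidean ball around $x$, which is then dominated by $M(f_{j'})(x)$.

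I would first set up the bridge from wavelet coefficients to integrals: on the cube $Q_{j',k'}$ of volume $2^{-nj'}$, the definition of $f_{j'}$ gives $f_{j'}(z)\geq 2^{nj'/2}|f^{\epsilon'}_{j',k'}|$, and integrating yields
$$2^{nj'/2}\,|f^{\epsilon'}_{j',k'}|\;\leq\;2^{nj'}\int_{Q_{j',k'}}f_{j'}(z)\,dz.$$
The sum over $\epsilon'\in E_n$ only contributes the constant factor $|E_n|$ and will be absorbed throughout.

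For the case $j\geq j'$, I would set $k^{\ast}=2^{j'-j}k\in\mathbb{R}^n$ and split $\mathbb{Z}^n$ into annuli $A_l=\{k'\in\mathbb{Z}^n:\,|k'-k^{\ast}|\sim 2^l\}$ for $l\geq 0$. Since $x\in Q_{j,k}$ and $2^{-j}\leq 2^{-j'}$, the union $\bigcup_{k'\in A_l}Q_{j',k'}$ sits in a Euclidean ball $B(x,C2^{l-j'})$. The bridge inequality then gives
$$\sum_{k'\in A_l}2^{nj'/2}|f^{\epsilon'}_{j',k'}|\;\lesssim\;2^{nj'}\,|B(x,C2^{l-j'})|\,M(f_{j'})(x)\;\lesssim\;2^{nl}\,M(f_{j'})(x).$$
Weighting by $(1+2^l)^{-N}$ and summing the geometric series (convergent because $N>n$) produces $g^k_{j,j'}\lesssim M(f_{j'})(x)$.

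For the case $j<j'$, the analogous decomposition is $A_l=\{k':\,|k-2^{j-j'}k'|\sim 2^l\}$. Here each unit step in $|k-2^{j-j'}k'|$ corresponds to $2^{j'-j}$ integer steps in $k'$, so $|A_l|\sim 2^{n(l+j'-j)}$, and the union of the associated cubes lies in $B(x,C2^{l-j})$. The parallel computation gives
$$\sum_{k'\in A_l}2^{nj'/2}|f^{\epsilon'}_{j',k'}|\;\lesssim\;2^{n(l+j'-j)}\,M(f_{j'})(x),$$
and summing $(1+2^l)^{-N}$ times this yields $g^k_{j,j'}\lesssim 2^{n(j'-j)}\,M(f_{j'})(x)$, the extra prefactor reflecting the denser packing of finer-scale cubes around $x$. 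The main obstacle is the scale bookkeeping in this second case: one must verify that the combinatorial loss $2^{n(j'-j)}$ from counting more integer points in the smallest annulus is matched exactly by the smaller Euclidean radius used to apply the maximal function, so that the net prefactor is $2^{n(j'-j)}$ and not larger. Convergence of the annular geometric series requires only $N>n$, so the hypothesis $N>2n+1$ leaves comfortable slack, presumably reserved for iterations of this weighting elsewhere in the paper.
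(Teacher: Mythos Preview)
Your proposal is correct and follows essentially the same approach as the paper's: the paper's sketch (citing \cite{Y}) organizes the sum by the least dyadic cube containing $Q_{j,k}$ and $Q_{j',k'}$, which is equivalent to your dyadic annular decomposition in $k'$ together with the ball $B(x,C2^{l-j'})$ (resp.\ $B(x,C2^{l-j})$) dominating the union of the relevant $Q_{j',k'}$. The scale bookkeeping in the case $j<j'$ is handled correctly, and your observation that only $N>n$ is actually needed here is accurate.
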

\begin{proof}
In fact, we only need to consider the cases where $j\geq j'$ and $j<j'$ respectively. For each given $(j,k)$ and $(j',k')$, we should considering the least dyadic cube containing $Q_{j,k}$ and $Q_{j',k'}$ for maximum operators. We then obtain the lemma after direct calculation. See \cite{Y}, Lemma 3.2, Chapter 5 for details.
\end{proof}

\begin{lemma}\label{FS}( {\texttt Fefferman Stein, See \cite{YCP}} )
For any $1<p,r<\infty$, $1\leq q\leq\infty$, we have
\begin{equation*}
\sum_{u}2^{ur}|\{\sum_{j}(M(f_j)(x)^q)^{\frac 1q}>2^u\}|^{\frac rp}\lesssim\sum_{u}2^{ur}|\{\sum_{j}(f_j(x)^q)^{\frac 1q}>2^u\}|^{\frac rp}.
\end{equation*}
\end{lemma}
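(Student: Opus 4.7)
The plan is to recognize the estimate as the vector-valued Fefferman--Stein maximal inequality on Lorentz spaces, then derive it from the classical (strong-type) $L^p(l^q)$ bound by real interpolation.

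First I would observe that the double sum $\sum_u 2^{ur}|\{g(x)>2^u\}|^{r/p}$ is equivalent, up to a constant depending only on $p$ and $r$, to $\|g\|_{L^{p,r}}^r$. This is a standard dyadic discretization of the Lorentz quasi-norm via the distribution function. Applied to $g(x)=(\sum_j|M f_j(x)|^q)^{1/q}$ on the left and $g(x)=(\sum_j|f_j(x)|^q)^{1/q}$ on the right, the inequality reduces to
\begin{equation*}
\Bigl\|\bigl(\sum_{j}|Mf_j|^q\bigr)^{1/q}\Bigr\|_{L^{p,r}}\lesssim \Bigl\|\bigl(\sum_{j}|f_j|^q\bigr)^{1/q}\Bigr\|_{L^{p,r}}.
\end{equation*}

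Next, I would invoke the classical Fefferman--Stein theorem: for every $1<p_0<\infty$ and $1<q\leq\infty$, the operator $T:(f_j)\mapsto(Mf_j)$ is bounded on $L^{p_0}(l^q)$. Choosing $p_0,p_1$ with $1<p_0<p<p_1<\infty$, this gives two strong-type endpoints for $T$ viewed as a sublinear operator on vector-valued spaces. Real interpolation (Marcinkiewicz) then yields boundedness on the intermediate space $(L^{p_0}(l^q),L^{p_1}(l^q))_{\theta,r}=L^{p,r}(l^q)$, where $1/p=(1-\theta)/p_0+\theta/p_1$; the identification of the intermediate space uses that real interpolation of Bochner spaces with a fixed target Banach lattice $l^q$ reproduces the scalar identification $(L^{p_0},L^{p_1})_{\theta,r}=L^{p,r}$. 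Combining with the first step gives exactly the claimed inequality.

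The main obstacle I anticipate is the endpoint behavior in $q$, since the classical Fefferman--Stein bound on $L^{p_0}(l^q)$ is genuinely a vector-valued statement that requires $q>1$. For $q=\infty$ one simply uses $\sup_j Mf_j\le M(\sup_j|f_j|)$ and the scalar Lorentz boundedness of $M$, which is well known for $1<p,r<\infty$. For $q=1$ (if included in the claim), the statement does not follow from a pointwise domination, and one must either invoke the Yang--Cheng--Peng characterization cited as \cite{YCP} — where a wavelet-side argument replaces each $Mf_j$ directly by a controlled sum as in Lemma \ref{HL max} — or restrict the application to $1<q\le\infty$, which is the only regime actually used in the later proofs. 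A secondary technicality is checking that the discretization in the first step behaves uniformly as $r$ varies; this is handled by the standard equivalence $\|g\|_{L^{p,r}}^r\approx \sum_u 2^{ur}|\{g>2^u\}|^{r/p}$ with constants depending only on $p,r$, so no compatibility issue arises when combining with the interpolation step.
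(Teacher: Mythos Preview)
The paper does not give its own proof of this lemma; it simply states the result and refers the reader to \cite{YCP}. Your proposal --- rewriting both sides as $L^{p,r}(l^q)$ quasi-norms and then obtaining the bound by real interpolation between two classical Fefferman--Stein endpoints $L^{p_0}(l^q)$ and $L^{p_1}(l^q)$ --- is a correct and standard route to this inequality, and your caveats about the $q=1$ and $q=\infty$ endpoints are accurate (the paper's applications in Sections~\ref{sec6}--\ref{sec7} only use $1\le q\le\infty$ with the $q=\infty$ case handled separately, so the $q=1$ issue is immaterial here).
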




\section{Properties of ${ ^{m'}_{m} \dot{F}}^{\frac{n}{p}-1, q}_{p, r}$  and its connection with $\dot{F}^{\frac{n}{p}-1, q}_{p, r}$}\label{sec3}
Let $E$ be the space constructed by $v(t,x)$,
which is equipped with the norm $\Vert v\Vert_{E}=\sup_{t>0}t^{\frac12-\frac n{2q}}\Vert v(t,x)\Vert_{(q,\infty)}+\sup_{t>0}\Vert v(t,x)\Vert_{(n,\infty)}$.
In order to find an accurate semigroup characterization of initial space $\dot{L}^{n,\infty}(\mathbb{R}^n)= \dot{F}^{0, 2}_{n,r}$ and work space $E$,
Barraza \cite{B} obtained exact relationship between his homogeneous Lorentz space $\dot{L}^{n,\infty}(\mathbb{R}^n)$ and work space $E$.
We extend their results to general Triebel-Lizorkin-Lorentz spaces $\dot{F}^{\frac{n}{p}-1, q}_{p,r}$.
But our work space ${ ^{m'}_{m} \dot{F}}^{\frac{n}{p}-1, q}_{p,r}$ is a single norm space,
where $m$ is related to regularity and $m'$ is related to stability.
In this section, we give the corresponding result for initial space $ \dot{F}^{\frac{n}{p}-1, q}_{p,r}$,
which includes not only $L^{n,\infty}(\mathbb{R}^n)$, but also Lebesgue spaces, Sobolev spaces, Triebel-Lizorkin spaces and many other useful spaces.

\subsection{Properties of work space}\label{sec3.1}


Let parameterized Besov space $B_{m,m'}$ be defined as follows:
$$(t2^{2j})^{m} 2^{(\frac{n}{2}-1)j} |f^{\epsilon}_{j,k}(t)|<\infty, \forall t2^{2j}\geq 1.$$
$$(t2^{2j})^{m'} 2^{(\frac{n}{2}-1)j} |f^{\epsilon}_{j,k}(t)|<\infty, \forall 0<t2^{2j}\leq 1.$$
By definition of ${ ^{m'}_{m} \dot{F}}^{\frac{n}{p}-1, q}_{p,r}$,
we have:
\begin{lemma}\label{bmm}
Given $1\leq p<\infty, 1\leq q,r\leq \infty,m'\geq0,m>0,$
$${ ^{m'}_{m} \dot{F}}^{\frac{n}{p}-1, q}_{p,r}\subset B_{m,m'}.$$
\end{lemma}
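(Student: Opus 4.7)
The proposal is to pass from the Lorentz-norm-type quantities $A_{r,p}^{m,q}$ and $A_{r,p}^{m',q}$ to a pointwise bound on each individual wavelet coefficient $f^{\epsilon}_{j,k}(t)$. The key observation is that on the dyadic cube $Q_{j,k}=\{x:2^jx-k\in[0,1)^n\}$, the quantity $f_j(t,x)$ reduces to a single term, so that $f_j(t,x)\geq 2^{\frac{n}{2}j}|f^{\epsilon}_{j,k}(t)|$ there for every $\epsilon\in E_n$.

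\textbf{Step 1: choice of $j_t$ and localization.} Fix $t>0$ and $(\epsilon,j,k)$. Choose $j_t\in\mathbb{Z}$ with $2^{-2j_t}\leq t<2^{2-2j_t}$, so that $t2^{2j_t}\in[1,4)$ and hence $t2^{2j}\approx 2^{2(j-j_t)}$. By the definition of $f_{j,j_t}$ in Subsection \ref{work space}, one has on $Q_{j,k}$
\[
f_{j,j_t}(x)\;\geq\; f_j(t,x)\;\geq\; 2^{\frac{n}{2}j}|f^{\epsilon}_{j,k}(t)|.
\]

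\textbf{Step 2: lower bound of the square function on $Q_{j,k}$.} Consider first the high-frequency regime $j\geq j_t$ (so $t2^{2j}\geq 1$). The square function
\[
G_{j_t}^m(x)=\Bigl(\sum_{j'\geq j_t}2^{2(j'-j_t)mq}\,2^{j'q(\frac{n}{p}-1)}f_{j',j_t}(x)^q\Bigr)^{1/q}
\]
satisfies, on $Q_{j,k}$, the pointwise lower bound
\[
G_{j_t}^m(x)\;\geq\; 2^{2(j-j_t)m}\,2^{j(\frac{n}{p}-1)}\,2^{\frac{nj}{2}}\,|f^{\epsilon}_{j,k}(t)|\;=:\;c.
\]

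\textbf{Step 3: passing from the Lorentz quantity to a pointwise estimate.} The quantity $A_{r,p}^{m,q}$ is comparable to $\|G_{j_t}^m\|_{L^{p,r}}^r$; in particular, since $G_{j_t}^m\geq c$ on a set of measure at least $|Q_{j,k}|=2^{-nj}$, choosing $u\in\mathbb{Z}$ with $c/4\leq 2^u\leq c/2$ gives $|\{G_{j_t}^m>2^u\}|\geq 2^{-nj}$ and hence
\[
A_{r,p}^{m,q}\;\geq\; 2^{ur}\,|\{G_{j_t}^m>2^u\}|^{r/p}\;\gtrsim\; c^r\,2^{-njr/p}.
\]
Substituting the value of $c$ and taking $r$-th roots, the factors $2^{j(\frac{n}{p}-1)}2^{\frac{nj}{2}}$ combine with $2^{-nj/p}$ to produce $2^{j(\frac{n}{2}-1)}$, so that
\[
(t2^{2j})^{m}\,2^{(\frac{n}{2}-1)j}\,|f^{\epsilon}_{j,k}(t)|\;\lesssim\;2^{2(j-j_t)m}\,2^{(\frac{n}{2}-1)j}\,|f^{\epsilon}_{j,k}(t)|\;\lesssim\;\bigl(A_{r,p}^{m,q}\bigr)^{1/r},
\]
which is exactly the desired $B_{m,m'}$ estimate in the high-frequency regime. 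The low-frequency case $j<j_t$ (so $0<t2^{2j}\leq 1$) is identical, with $m$ replaced by $m'$ and $A_{r,p}^{m,q}$ replaced by $A_{r,p}^{m',q}$.

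\textbf{Expected obstacle.} The argument itself is almost mechanical once the localization on $Q_{j,k}$ is observed; the only delicate point is the comparability between the dyadic distribution sum in the definition of $A_{r,p}^{m,q}$ and the usual $L^{p,r}$ quasi-norm of the square function, and the sharp constant $\|\chi_E\|_{L^{p,r}}\approx|E|^{1/p}$ which underlies Step 3. These are standard for $1<p<\infty$, $1\leq r\leq\infty$, but the boundary cases $p=1$ or $q=\infty$ would require a small adjustment (taking a supremum in $j'$ instead of the $\ell^q$-sum), which is nevertheless routine.
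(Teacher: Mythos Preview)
Your proof is correct and follows essentially the same idea as the paper's: both extract a single dyadic cube's contribution to the Lorentz-type sum $A_{r,p}^{m,q}$ to bound each individual coefficient $f^{\epsilon}_{j,k}(t)$. The paper phrases the argument via counting the set $\{k:\sup_{\epsilon}|f^{\epsilon}_{j,k}(t)|>\lambda\}$ and locating the maximal level $u_{j,j_t}$ at which this count is still positive, but this is equivalent to your direct localization on $Q_{j,k}$ together with the choice of a level $u$ with $2^{u}\sim c$; your presentation is in fact slightly cleaner.
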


\begin{proof}
For $E\subset \mathbb{Z}^{n}$, denote $\# E$ the number of elements in $E$.
For any $j_t\in\mathbb{Z}$ and $j\geq j_t$, we have
$$\sum_u2^{ur}|\{x:2^{2(j-j_t)mq}2^{jq(\frac np-1)}(f_{j,j_t}(x))^q>2^{qu}\}|^{\frac rp}<\infty .$$

Hence$$\sum_u2^{ur}|\{x:\sup_{2^{-2j_t}\leq t<2^{2-2j_t}}\sum_{\epsilon,k}|f^{\epsilon}_{j,k}(t)|\chi(2^jx-k)>2^{-\frac n2j}2^{2(j_t-j)m}2^{j(1-\frac np)}2^{u}\}|^{\frac rp}<\infty  .$$
Then we can get that
$$\sum_u2^{ur-nj\frac rp}\#\{k:\sup_{2^{-2j_t}\leq t<2^{2-2j_t}}\sum_{\epsilon}|f^{\epsilon}_{j,k}(t)|>2^{-\frac n2j}2^{2(j_t-j)m}2^{j(1-\frac np)}2^{u}\}^{\frac rp}<\infty  .$$
Let $u_{j,j_t}>0$ and it satisfies that
$$\#\{k:\sup_{2^{-2j_t}\leq t<2^{2-2j_t}}\sum_{\epsilon}|f^{\epsilon}_{j,k}(t)|>2^{-\frac n2j}2^{2(j_t-j)m}2^{j(1-\frac np)}2^{u_{j,j_t}}\}>0$$ and
$$\#\{k:\sup_{2^{-2j_t}\leq t<2^{2-2j_t}}\sum_{\epsilon}|f^{\epsilon}_{j,k}(t)|>2^{-\frac n2j}2^{2(j_t-j)m}2^{j(1-\frac np)}2^{u_{j,j_t}+1}\}=0.$$
We have $2^{ru_{j,j_t}-nj\frac rp}<\infty$ and for any $k\in\mathbb{Z}^n$,
$$\sup_{\epsilon,2^{-2j_t}\leq t<2^{2-2j_t}}|f^{\epsilon}_{j,k}(t)|\leq2^{-\frac n2j}2^{2(j_t-j)m}2^{j(1-\frac np)}2^{u_{j,j_t}+1}.$$
According to the arbitrariness of $j_t$, we obtain
$$(t2^{2j})^{m}2^{-j}\cdot2^{\frac n2 j}\sup\limits_{\epsilon\in E_n}|f^{\epsilon}_{j,k}(t)|<\infty, \forall t2^{2j}\geq1.$$
That means $(t2^{2j})^m2^{j(\frac n2-1)}|f^{\epsilon}_{j,k}(t)|<\infty$.
The same is true for $j<j_t$.

\end{proof}

\subsection{Boundedness of $e^{(-t\Delta)^\gamma}e^{t\Delta}$}
\par For any $f\in\dot{F}^{\frac{n}{p}-1,q}_{p,r}$, let $f(t,x)=e^{(-t\Delta)^\gamma}e^{t\Delta}f$. Take $\{f^{\epsilon}_{j,k}\}_{(\epsilon,j,k)\in\Lambda_n}$ and $\{f^{\epsilon}_{j,k}(t)\}_{(\epsilon,j,k)\in\Lambda_n}$ as their wavelet coefficients.
If $0\leq \gamma<1$,
taking notice of the support set of the Fourier transform of Meyer wavelets, it holds
$$f^{\epsilon}_{j,k}(t)=\sum_{\epsilon^{\prime},\lvert j-j^{\prime}\rvert\leq1,k^{\prime}}f^{\epsilon^{\prime}}_{j^{\prime},k^{\prime}}\\\langle e^{(-t\Delta)^\gamma}e^{t\Delta}\phi^{\epsilon^{\prime}}_{j^{\prime},k^{\prime}},\phi^{\epsilon}_{j,k}\rangle.$$

 For the purpose of precisely clarifying the connection between $f(t,x)$ in ${ ^{m'}_{m} \dot{F}}^{\frac{n}{p}-1, q}_{p,r}$ and its trace function $f(0,x)$ in $ \dot{F} ^{\frac{n}{p}-1,q} _{p,r}$, we consider the continuity of $e^{(-t\Delta)^\gamma}e^{t\Delta}$ in this subsection. For 
$\gamma=0$, this corresponds to the boundedness of heat flow. 

We state the following lemma without proof. The case of $\gamma=0$ has been proved in \cite{LXY}. Other cases can be proved in the same way. 
\begin{lemma}\label{le:3.1}
There exists a constant $N^{\prime}\in\mathbb{N}_+$ large enough and a small constant $\widetilde{c}>0$ that for any positive integer $N$, as long as $N>N^{\prime}$, then
\begin{equation}
\lvert f^{\epsilon}_{j,k}(t)\rvert\lesssim e^{-\widetilde{c}t2^{2j}}\sum_{\epsilon^{\prime},\lvert j-j^{\prime}\rvert\leq 1,k^{\prime}}\lvert f^{\epsilon^{\prime}}_{j^{\prime},k^{\prime}}\rvert(1+\lvert2^{j-j^{\prime}}k^{\prime}-k\rvert)^{-N},\forall t2^{2j}\geq1
\end{equation}
and
\begin{equation}
\lvert f^{\epsilon}_{j,k}(t)\rvert\lesssim \sum_{\epsilon^{\prime},\lvert j-j^{\prime}\rvert\leq 1,k^{\prime}}\lvert f^{\epsilon^{\prime}}_{j^{\prime},k^{\prime}}\rvert(1+\lvert2^{j-j^{\prime}}k^{\prime}-k\rvert)^{-N},\forall 0< t2^{2j}\leq1\,.
\end{equation}
\end{lemma}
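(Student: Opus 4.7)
\textbf{Proof plan for Lemma \ref{le:3.1}.} The author already notes that this is modeled on the $\gamma=0$ case from \cite{LXY}; the only new ingredient is to handle the extra Fourier multiplier $e^{(-t\Delta)^\gamma}$ with symbol $e^{t^\gamma|\xi|^{2\gamma}}$. The plan is therefore to reduce everything to an estimate of the kernel
\[
K^{\epsilon,\epsilon'}_{j,j',k,k'}(t)=\langle e^{(-t\Delta)^\gamma}e^{t\Delta}\phi^{\epsilon'}_{j',k'},\phi^{\epsilon}_{j,k}\rangle
\]
for $|j-j'|\le 1$ (the only case surviving because $\widehat{\phi^{\epsilon}}$ is supported where $|\xi|\sim 2^{j}$), and to show $|K^{\epsilon,\epsilon'}_{j,j',k,k'}(t)|\lesssim \omega(t,j)\,(1+|k-2^{j-j'}k'|)^{-N}$ with $\omega(t,j)=e^{-\widetilde c\, t 2^{2j}}$ when $t2^{2j}\ge 1$ and $\omega(t,j)=1$ when $0<t2^{2j}\le 1$. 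The stated estimates follow by summing over $|j-j'|\le 1$ and $k'$.

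By Plancherel and the rescaling $\xi=2^{j}\eta$ on the wavelet support,
\[
K^{\epsilon,\epsilon'}_{j,j',k,k'}(t)=2^{n(j-j')/2}\int_{|\eta|\sim 1} m_{t,j}(\eta)\,\widehat{\phi^{\epsilon'}}(2^{j-j'}\eta)\,\overline{\widehat{\phi^{\epsilon}}(\eta)}\,e^{i\eta\cdot(k-2^{j-j'}k')}\,d\eta,
\]
where $m_{t,j}(\eta):=e^{(t2^{2j})^{\gamma}|\eta|^{2\gamma}-t2^{2j}|\eta|^{2}}$. The prefactor $2^{n(j-j')/2}$ is $O(1)$ since $|j-j'|\le 1$, and the wavelet factors together with all their $\eta$-derivatives are $O(1)$ on the annulus $|\eta|\sim 1$. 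Thus the whole task is to control $m_{t,j}$ and its derivatives on $|\eta|\sim 1$, and then integrate by parts.

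Write $u:=t2^{2j}$. The key elementary inequality is that for any $\widetilde c\in(0,1)$ there is $C(\widetilde c,\gamma)$ with $s^{\gamma}-s\le -\widetilde c\,s+C$ for all $s\ge 0$, because $s-s^{\gamma}\sim s$ as $s\to\infty$ and $s^{\gamma}-s$ is bounded on compact sets. Applied pointwise with $s=u|\eta|^{2}$ on $|\eta|\sim 1$, this yields $|m_{t,j}(\eta)|\lesssim e^{-\widetilde c u}$ when $u\ge 1$ and $|m_{t,j}(\eta)|\lesssim 1$ when $u\le 1$. For higher derivatives, Faà di Bruno applied to $m_{t,j}=\exp(\Phi_{u})$ with $\Phi_{u}(\eta)=u^{\gamma}|\eta|^{2\gamma}-u|\eta|^{2}$ gives $|\partial^{\alpha}_{\eta}m_{t,j}(\eta)|\lesssim_{\alpha}(1+u)^{|\alpha|}|m_{t,j}(\eta)|$ on $|\eta|\sim 1$; the polynomial factor $(1+u)^{|\alpha|}$ is absorbed into the exponential by shrinking $\widetilde c$ slightly, so $|\partial^{\alpha}_{\eta}m_{t,j}(\eta)|\lesssim_{\alpha}\omega(t,j)$ in both regimes.

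With these amplitude bounds in hand, $N$ integrations by parts in $\eta$ against the oscillatory factor $e^{i\eta\cdot(k-2^{j-j'}k')}$ pick up $|k-2^{j-j'}k'|^{-N}$ and distribute derivatives (via Leibniz) onto the smooth, uniformly bounded wavelet factors and onto $m_{t,j}$, all of which we have just controlled. Combining with the trivial bound for $|k-2^{j-j'}k'|\le 1$ gives the stated $(1+|k-2^{j-j'}k'|)^{-N}$ decay. Expanding $f^{\epsilon}_{j,k}(t)$ through the kernel and summing over $|j-j'|\le 1,\epsilon',k'$ then yields the two displayed inequalities. The main obstacle is simply the clean bookkeeping of the Faà di Bruno expansion so that the exponential decay in $u=t2^{2j}$ survives an arbitrary number of differentiations; once the elementary comparison $s^{\gamma}-s\le -\widetilde c\,s+C$ is in place, this is a straightforward polynomial-absorbed-by-exponential argument uniform in $t$ and $j$.
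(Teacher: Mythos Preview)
Your plan is correct and is precisely the argument the paper has in mind: the authors omit the proof entirely, pointing to \cite{LXY} for $\gamma=0$ and stating that ``other cases can be proved in the same way,'' and your Plancherel/rescaling/integration-by-parts scheme on the Meyer annulus, with the elementary comparison $s^{\gamma}-s\le -\widetilde c\,s+C$ to absorb the Gevrey multiplier, is exactly that same way. There is nothing to add; your bookkeeping of the Fa\`a di Bruno terms and the absorption of the polynomial $(1+u)^{|\alpha|}$ into the exponential are the only places requiring care, and you have identified them.
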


\begin{theorem} \label{th:B-to-Y}
Given $0\leq\gamma<1$, $1<p,r< \infty$,$ 1\leq q< \infty$ and $ m'\geq0, m>0$ or $ q=\infty$ and $m',m>0$.
If $f\in \dot{F} ^{\frac{n}{p}-1,q} _{p,r}$, then $ e^{(-t\Delta)^\gamma}e^{t\Delta} f \in { ^{m'}_{m} \dot{F}}^{\frac{n}{p}-1, q}_{p,r}.$

\end{theorem}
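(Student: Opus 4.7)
The plan is to combine the pointwise wavelet coefficient bounds of Lemma \ref{le:3.1} with the Hardy--Littlewood maximal estimate of Lemma \ref{HL max} and the vector-valued Fefferman--Stein inequality of Lemma \ref{FS}, working uniformly in the auxiliary parameter $j_t$ that defines $A^{m,q}_{r,p}$ and $A^{m',q}_{r,p}$. First I would fix $j_t\in\zz$ and split the wavelet scale $j$ into the high-frequency regime $j\geq j_t$ and the low-frequency regime $j<j_t$; this is exactly the dichotomy between the two estimates in Lemma \ref{le:3.1}, since for $t\in[2^{-2j_t},2^{2-2j_t})$ one has $2^{2(j-j_t)}\leq t2^{2j}<4\cdot 2^{2(j-j_t)}$.

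In the high-frequency regime I would apply the first estimate of Lemma \ref{le:3.1} and absorb $e^{-\tilde{c}t2^{2j}}\leq e^{-\tilde{c}\,2^{2(j-j_t)}}$ into the later summations, then pass to the supremum in $t$ and invoke Lemma \ref{HL max} on the $(1+|2^{j-j'}k'-k|)^{-N}$-weighted sums, using $|j-j'|\leq 1$ so that $2^{nj/2}\approx 2^{nj'/2}$, to arrive at
\begin{equation*}
f_{j,j_t}(x)\lesssim e^{-\tilde{c}\,2^{2(j-j_t)}}\sum_{|j-j'|\leq 1}M(f_{j'})(x).
\end{equation*}
Since the coefficient $2^{2(j-j_t)mq}e^{-\tilde{c}q\,2^{2(j-j_t)}}$ is summable (or bounded in the $q=\infty$ case) in $j\geq j_t$ uniformly in $j_t$, a shift of index $j\mapsto j'$ then yields the pointwise domination
\begin{equation*}
\sum_{j\geq j_t}2^{2(j-j_t)mq}2^{jq(\frac{n}{p}-1)}(f_{j,j_t}(x))^q\lesssim \sum_{j'}2^{j'q(\frac{n}{p}-1)}(M(f_{j'})(x))^q.
\end{equation*}

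For the low-frequency regime $j<j_t$ I would apply the second estimate of Lemma \ref{le:3.1} (no exponential gain is needed since here $t2^{2j}\leq 1$) together with the same maximal bound to get $f_{j,j_t}(x)\lesssim \sum_{|j-j'|\leq 1}M(f_{j'})(x)$; the weight $2^{2(j-j_t)m'q}\leq 1$ when $m'\geq 0$ and $j-j_t<0$, reducing this regime to the identical pointwise dominator. Combining both regimes, absorbing the factor $2^{j(\frac{n}{p}-1)}$ into the wavelet sequence, and applying Fefferman--Stein (Lemma \ref{FS}) then gives
\begin{equation*}
A^{m,q}_{r,p}+A^{m',q}_{r,p}\lesssim \sum_u 2^{ur}\bigl|\bigl\{x:\textstyle\sum_{j}2^{jq(\frac{n}{p}-1)}(f_j(x))^q>2^{qu}\bigr\}\bigr|^{\frac{r}{p}}\lesssim \|f\|^r_{\dot{F}^{\frac{n}{p}-1,q}_{p,r}},
\end{equation*}
with constants independent of $j_t$, which is the desired conclusion.

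The main obstacle is ensuring uniformity in $j_t$: this is precisely why the exponential gain $e^{-\tilde{c}\,2^{2(j-j_t)}}$ in Lemma \ref{le:3.1} is essential (it absorbs the polynomial factor $2^{2(j-j_t)mq}$ for every $m>0$ in the high-frequency regime) and why the sign condition on $m'$ is needed to control the low-frequency regime (strictness $m'>0$ in the $q=\infty$ case simply reflects the passage from summation to supremum). All remaining steps are routine applications of the maximal function machinery collected in Section \ref{sec2}.
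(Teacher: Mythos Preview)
Your proposal is correct and follows essentially the same route as the paper's proof: both combine the pointwise wavelet-coefficient bounds of Lemma \ref{le:3.1} with Lemma \ref{HL max} to reach $f_{j,j_t}(x)\lesssim e^{-\tilde{c}\,2^{2(j-j_t)}}\sum_{|j-j'|\leq 1}M(f_{j'})(x)$ in the high-frequency regime, use the summability of $2^{2(j-j_t)mq}e^{-\tilde{c}q\,2^{2(j-j_t)}}$ to absorb the weight, treat the low-frequency regime via the trivial bound $2^{2(j-j_t)m'q}\leq 1$, and then conclude with the Fefferman--Stein inequality (Lemma \ref{FS}). Your remark about why $m'>0$ is needed when $q=\infty$ is also in line with the paper's case split.
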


\begin{proof}

\par For $t2^{2j}> 1$, by lemma \ref{le:3.1}, we have
$$\lvert f^{\epsilon}_{j,k}(t)\rvert\lesssim\sum_{\epsilon^{\prime},\lvert j-j^{\prime}\rvert\leq1,k^{\prime}}\lvert f^{\epsilon^{\prime}}_{j^{\prime},k^{\prime}}\rvert(1+\lvert 2^{j-j^{\prime}}k^{\prime}-k\rvert)^{-N}e^{-\widetilde{c}t2^{2j}}.$$
We need to use lemma \ref{HL max} and finally get that $$\lvert f_j(t,x)\rvert\lesssim\sum_{\lvert j-j^{\prime}\rvert\leq1} M(f_{j^{\prime}})(x)e^{-\widetilde{c}t2^{2j}}.$$
Case 1: $1\leq q<\infty$. For any $j_t\in\mathbb{Z}$, let $f_{j,j_t}(x)=\sup_{2^{-2j_t}\leq t<2^{2-2j_t}}f_j(t,x)$ and $A_{r,p}^{m,q} =\sup_{j_t\in\mathbb{Z}}\sum_u2^{ur}|\{x:\sum_{j\geq j_t}2^{2(j-j_t)mq}2^{jq(\frac np-1)}(f_{j,j_t}(x))^q>2^{qu}\}|^{\frac rp}$. Then
\begin{equation*}
\begin{split}
A_{r,p}^{m,q} \lesssim&\sup_{j_t\in\mathbb{Z}}\sum_u2^{ur}|\{x:\sum_{j\geq j_t}2^{2(j-j_t)mq}2^{jq(\frac np-1)}e^{-\widetilde{c}q2^{2(j-j_t)}}(\sum_{\lvert j-j^{\prime}\rvert\leq1} M(f_{j^{\prime}})(x))^q>2^{qu}\}|^{\frac rp}\\
\lesssim&\sup_{j_t\in\mathbb{Z}}\sum_u2^{ur}|\{x:\sum_{j\geq j_t}\sum_{\lvert j-j^{\prime}\rvert\leq1}2^{2(j-j_t)mq}e^{-\widetilde{c}q2^{2(j-j_t)}}2^{j'q(\frac np-1)}( M(f_{j^{\prime}})(x))^q>2^{qu}\}|^{\frac rp}\\
\lesssim&\sup_{j_t\in\mathbb{Z}}\sum_u2^{ur}|\{x:\sum_{j'\geq j_t-1}\sum_{j:\lvert j-j^{\prime}\rvert\leq1}2^{j'q(\frac np-1)}( M(f_{j^{\prime}})(x))^q>2^{qu}\}|^{\frac rp}\\
\lesssim&\sum_u2^{ur}|\{x:\sum_{j'}( M(2^{j'(\frac np-1)}f_{j^{\prime}})(x))^q>2^{qu}\}|^{\frac rp}.
\end{split}
\end{equation*}
According to lemma \ref{FS}, we have
\begin{equation*}
\begin{split}
A_{r,p}^{m,q} \lesssim&\sum_u2^{ur}|\{x:\sum_{j'} 2^{j'q(\frac np-1)}(f_{j^{\prime}}(x))^q>2^{qu}\}|^{\frac rp}\lesssim\Vert f\Vert_{{\dot{F}}^{\frac{n}{p}-1, q}_{p,r}}.
\end{split}
\end{equation*}

Case 2: $q=\infty$. Analogously, let $f_{j,j_t}(x)=\sup_{2^{-2j_t}\leq t<2^{2-2j_t}}f_j(t,x)$ and $A_{r,p}^{m,q} =\sup_{j_t\in\mathbb{Z}}\sum_u2^{ur}|\{x:\sup_{j\geq j_t}2^{2(j-j_t)m}2^{j(\frac np-1)}f_{j,j_t}(x)>2^{u}\}|^{\frac rp}$. That denotes
\begin{equation*}
\begin{split}
A_{r,p}^{m,q} \lesssim&\sup_{j_t\in\mathbb{Z}}\sum_u2^{ur}|\{x:\sup_{j\geq j_t}2^{2(j-j_t)m}2^{j(\frac np-1)}e^{-\widetilde{c}2^{2(j-j_t)}}\sum_{\lvert j-j^{\prime}\rvert\leq1} M(f_{j^{\prime}})(x)>2^{u}\}|^{\frac rp}\\
\lesssim&\sup_{j_t\in\mathbb{Z}}\sum_u2^{ur}|\{x:\sup_{j\geq j_t}\sum_{\lvert j-j^{\prime}\rvert\leq1}2^{2(j-j_t)m}e^{-\widetilde{c}2^{2(j-j_t)}}2^{j'(\frac np-1)} M(f_{j^{\prime}})(x)>2^{u}\}|^{\frac rp}\\
\lesssim&\sup_{j_t\in\mathbb{Z}}\sum_u2^{ur}|\{x:\sum_{j'\geq j_t-1}\sup_{j:\lvert j-j^{\prime}\rvert\leq1}2^{2(j-j_t)m}e^{-\widetilde{c}2^{2(j-j_t)}}2^{j'(\frac np-1)} M(f_{j^{\prime}})(x)>2^{u}\}|^{\frac rp}\\
\lesssim&\sum_u2^{ur}|\{x:\sup_{j'} M(2^{j'(\frac np-1)}f_{j^{\prime}})(x)>2^{u}\}|^{\frac rp}\lesssim\Vert f\Vert_{{\dot{F}}^{\frac{n}{p}-1, \infty}_{p,r}}.
\end{split}
\end{equation*}

For $0<t2^{2j}\leq1$, let $A_{r,p}^{m',q} =\sup_{j_t\in\mathbb{Z}}\sum_u2^{ur}|\{x:\sum_{j< j_t}2^{2(j-j_t)m'q}2^{jq(\frac np-1)}(f_{j,j_t}(x))^q>2^{qu}\}|^{\frac rp}$ as we defined in subsection \ref{work space}. In the same way we can get that
\begin{equation*}
\begin{split}
A_{r,p}^{m',q}\lesssim\sum_u2^{ur}|\{x:\sum_{j} 2^{jq(\frac np-1)}(f_{j}(x))^q>2^{qu}\}|^{\frac rp}=\Vert f\Vert_{{\dot{F}}^{\frac{n}{p}-1, q}_{p,r}}\,.
\end{split}
\end{equation*}
\end{proof}




\section{Proof of theorem \ref{mthmain} (ii)}\label{mainproof}
Recall that the integral form of (\ref{NS}):
\begin{equation*}
\begin{split}
u(t,x)=e^{t\Delta}u_0- \int_{0}^{t} e^{(t-s)\Delta}\mathbb {P}\nabla(u\otimes u)ds\,.
\end{split}
\end{equation*}
We aim to find the mild solution $u$ such that $e^{(-t\Delta)^\gamma}u\in({ ^{m'}_{m} \dot{F}}^{\frac{n}{p}-1, q}_{p, r})^n$. Let $\widetilde{u}(t,x)=e^{(-t\Delta)^\gamma}u(t,x)$ and substitute it into integral equation:
\begin{equation*}
\begin{split}
\widetilde{u}(t,x)=e^{(-t\Delta)^\gamma}e^{t\Delta}u_0- e^{(-t\Delta)^\gamma}\int_{0}^{t} e^{(t-s)\Delta}\mathbb {P}\nabla(e^{-(-s\Delta)^\gamma}\widetilde{u}\otimes e^{-(-s\Delta)^\gamma}\widetilde{u})ds\,.
\end{split}
\end{equation*}
We have proved that if $u\in \dot{F}^{\frac{n}{p}-1,q}_{p,r}$, then $e^{(-t\Delta)^\gamma}e^{t\Delta}u\in{ ^{m'}_{m} \dot{F}}^{\frac{n}{p}-1, q}_{p, r}$ for any $0<\gamma<1$ in Section \ref{sec3}. By fixed point theory in \cite{Lem}, it remains to get the boundedness of $B^{\gamma}$, which is defined by
\begin{equation*}
\begin{split}
B^{\gamma}(\widetilde{u},\widetilde{v})=e^{(-t\Delta)^\gamma}\int_{0}^{t} e^{(t-s)\Delta}\mathbb {P}\nabla(e^{-(-s\Delta)^\gamma}\widetilde{u}\otimes e^{-(-s\Delta)^\gamma}\widetilde{v})ds\,.
\end{split}
\end{equation*}
For $l,l',l''=1,\cdots,n$, define
\begin{equation}\label{61eq}
B_{l}(\widetilde{u},\widetilde{v})= e^{(-t\Delta)^\gamma}\int^{t}_{0} e^{(t-s)\Delta}\partial_{x_{l}}(e^{-(-s\Delta)^\gamma}\widetilde{u}\cdot e^{-(-s\Delta)^\gamma}\widetilde{v}) ds,
\end{equation}
\begin{equation}\label{62eq}
B_{l,l',l''}(\widetilde{u},\widetilde{v})= e^{(-t\Delta)^\gamma}\int^{t}_{0} e^{(t-s)\Delta} \partial_{x_l}\partial_{x_{l'}}\partial_{x_{l''}}(-\Delta)^{-1} (e^{-(-s\Delta)^\gamma}\widetilde{u}\cdot e^{-(-s\Delta)^\gamma}\widetilde{v}) ds.
\end{equation}
It's easy to verify that if $B_{l}$ and $B_{l,l',l''}$ is bounded from ${ ^{m'}_{m} \dot{F}}^{\frac{n}{p}-1, q}_{p, r}\times { ^{m'}_{m} \dot{F}}^{\frac{n}{p}-1, q}_{p, r}$
to ${ ^{m'}_{m} \dot{F}}^{\frac{n}{p}-1, q}_{p, r}$ for any $l,l',l''=1,\cdots,n$, then $B^{\gamma}$ is bounded from $({ ^{m'}_{m} \dot{F}}^{\frac{n}{p}-1, q}_{p, r})^{n}\times ({ ^{m'}_{m} \dot{F}}^{\frac{n}{p}-1, q}_{p, r})^{n}$
to $({ ^{m'}_{m} \dot{F}}^{\frac{n}{p}-1, q}_{p, r})^{n}$. Since the term $B_{l}$ is easier to prove, we consider only $B_{l,l',l''}$.

For $j\in\mathbb{Z}$ and $\epsilon\in \{0,1\}^{n}\backslash \{0\}$, define $f^{\epsilon}_{j,k}(t)=\langle f(t),\phi^{\epsilon}_{j,k}\rangle$. Similarly, in the rest of this article, we will denote the wavelet coefficients of $\widetilde{u}(t,x)$ and $\widetilde{v}(t,x)$ by $\widetilde{u}^{\epsilon}_{j,k}(t)$ and $\widetilde{v}^{\epsilon}_{j,k}(t)$ respectively. Let
\begin{equation}\label{Q}
Q_{j}^{\epsilon}f(t,x)= \sum\limits_{k \in \mathbb{Z}^n
} f^{\epsilon}_{j,k}(t) \phi^{\epsilon}_{j,k}(x)
\text{ and } Q_{j}f(x)= \sum\limits_{\epsilon \in
E_n} Q^{\epsilon}_{j} f(x). \end{equation}
We decompose the product of any two functions $\widetilde{u}$
and $\widetilde{v}$ according to the method of (\ref{eq:decompose}).

Define the following three function flows:
\begin{align*}
B_{l,l',l'',1}(\widetilde{u},\widetilde{v})&= e^{(-t\Delta)^\gamma}\int^{t}_{0} e^{(t-s)\Delta} \partial_{x_l}\partial_{x_{l'}}\partial_{x_{l''}}(-\Delta)^{-1} \left(\sum_{\epsilon,\epsilon',\lvert j-j^{\prime}\rvert\leq2}e^{-(-s\Delta)^\gamma}Q^{\epsilon}_j\widetilde{u}\cdot e^{-(-s\Delta)^\gamma}Q^{\epsilon'}_{j^{\prime}}\widetilde{v}\right) ds,\\
B_{l,l',l'',2}(\widetilde{u},\widetilde{v})&=e^{(-t\Delta)^\gamma}\int^{t}_{0} e^{(t-s)\Delta} \partial_{x_l}\partial_{x_{l'}}\partial_{x_{l''}}(-\Delta)^{-1}\left(\sum_{\epsilon,\epsilon'\!,\;j-j'\geq-3}e^{-(-s\Delta)^\gamma}Q^{\epsilon}_j\widetilde{u}\cdot e^{-(-s\Delta)^\gamma}Q^{\epsilon'}_{j'}\widetilde{v}\right) ds,\\
B_{l,l',l'',3}(\widetilde{u},\widetilde{v})&=e^{(-t\Delta)^\gamma}\int^{t}_{0} e^{(t-s)\Delta} \partial_{x_l}\partial_{x_{l'}}\partial_{x_{l''}}(-\Delta)^{-1}\left(\sum_{\epsilon,\epsilon'\!,\;j-j'\leq3}e^{-(-s\Delta)^\gamma}Q^{\epsilon}_j\widetilde{u}\cdot e^{-(-s\Delta)^\gamma}Q^{\epsilon'}_{j'}\widetilde{v}\right) ds
\end{align*}
The continuity of $B_{l,l',l'',1}(\widetilde{u},\widetilde{v})$, $B_{l,l',l'',2}(\widetilde{u},\widetilde{v})$ and $B_{l,l',l'',3}(\widetilde{u},\widetilde{v})$ completes the proof. We summarize the continuity as the following theorem, which are proved in Sections \ref{sec6} and \ref{sec7}. By symmetry, we omit the proof of boundedness of $B_{l,l',l'',3}$.

\begin{theorem} \label{bili} Given  $  1\leq q\leq\infty, 1< r< \infty,n< p< \infty , 0\leq m'<\frac12-\frac n{4p},m>1-\frac n{2p}$, $0<\gamma<\min\{\frac {m}{2n+2}-\frac 1{4n+4}+\frac n{8pn+8p},\frac 1{4n+4}-\frac n{4pn+4p},\frac m{6n+6}\}$. 
$l,l',l''=1,\cdots,n$.
If $\widetilde{u},\widetilde{v}\in { ^{m'}_{m} \dot{F}}^{\frac{n}{p}-1, q}_{p, r}$, then
\begin{align*}
\begin{array}{ll}
 & (i)\;B_{l,l',l'',1}(\widetilde{u},\widetilde{v}) \in { ^{m'}_{m} \dot{F}}^{\frac{n}{p}-1, q}_{p, r}.\\
 & (ii)\;B_{l,l',l'',2}(\widetilde{u},\widetilde{v}), B_{l,l',l'',3}(\widetilde{u},\widetilde{v}) \in { ^{m'}_{m} \dot{F}}^{\frac{n}{p}-1, q}_{p, r}.
\end{array}
\end{align*}
\end{theorem}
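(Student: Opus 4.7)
The plan is to pass to Meyer wavelet coefficients. Fix $(\epsilon,j,k)\in\Lambda_n$ and a time slab $2^{-2j_t}\le t<2^{2-2j_t}$. The wavelet coefficient of $B_{l,l',l'',i}(\widetilde u,\widetilde v)$ takes the form
$$
[B_{l,l',l'',i}(\widetilde u,\widetilde v)]^{\epsilon}_{j,k}(t)=\sum_{(j_1,j_2)\in\Sigma_i}\sum_{\epsilon_1,\epsilon_2,k_1,k_2}\int_0^t K(t,s)\,\widetilde u^{\epsilon_1}_{j_1,k_1}(s)\,\widetilde v^{\epsilon_2}_{j_2,k_2}(s)\,ds,
$$
where $\Sigma_1=\{|j_1-j_2|\le 2\}$, $\Sigma_2=\{j_1-j_2\ge -3\}$, $\Sigma_3=\{j_1-j_2\le 3\}$, and $K(t,s)$ is the inner product of $\phi^\epsilon_{j,k}$ with $e^{(-t\Delta)^\gamma}e^{(t-s)\Delta}\partial_{x_l}\partial_{x_{l'}}\partial_{x_{l''}}(-\Delta)^{-1}(e^{-(-s\Delta)^\gamma}\phi^{\epsilon_1}_{j_1,k_1}\cdot e^{-(-s\Delta)^\gamma}\phi^{\epsilon_2}_{j_2,k_2})$. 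The case $i=3$ reduces to $i=2$ by swapping $\widetilde u$ and $\widetilde v$, so only $i=1,2$ need work.

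The analytic core is a kernel estimate extending Lemma \ref{le:3.1} to the combined operator. Set $j_{\max}=\max(j_1,j_2)$. Meyer wavelet Fourier localization forces the product to have Fourier support of size $\lesssim 2^{j_{\max}+2}$, hence $j\le j_{\max}+c$ in case $i=1$ and $j=j_{\max}+O(1)$ in case $i=2$. The symbol
$$
\frac{i^3\xi_l\xi_{l'}\xi_{l''}}{|\xi|^2}\,e^{-(t-s)|\xi|^2+t|\xi|^{2\gamma}}\,e^{-s(|\zeta_1|^{2\gamma}+|\zeta_2|^{2\gamma})},
$$
with $|\xi|\lesssim 2^j$ and $|\zeta_i|\sim 2^{j_i}$, behaves under the smallness hypotheses on $\gamma$ as an effective heat multiplier at scale $2^j$: the Gevrey blow-up factor $e^{t|\xi|^{2\gamma}}$ is controlled by splitting the $s$-integral into $s<t/2$ and $s\ge t/2$ and absorbing with either $e^{-(t-s)|\xi|^2}$ or $e^{-s(|\zeta_1|^{2\gamma}+|\zeta_2|^{2\gamma})}$. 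Repeated integration by parts against $\phi^\epsilon_{j,k}$ then yields
$$
|K(t,s)|\lesssim 2^{j}\,e^{-c(t-s)2^{2j}}\,(1+|k-2^{j-j_1}k_1|)^{-N}(1+|k-2^{j-j_2}k_2|)^{-N},
$$
possibly with an extra factor $2^{\frac{n}{2}(j_1+j_2-2j)}$ in the diagonal regime reflecting the paraproduct geometry.

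Feeding this bound in, invoking Lemma \ref{HL max} to collapse the translation sums over $k_1,k_2$ into Hardy--Littlewood maximal functions of $\widetilde u_{j_1}(s)$ and $\widetilde v_{j_2}(s)$, and carrying out the $s$-integration via $\int_0^t e^{-c(t-s)2^{2j}}ds\lesssim 2^{-2j}$, one obtains a schematic pointwise estimate
$$
[B_{l,l',l'',i}(\widetilde u,\widetilde v)]_{j,j_t}(x)\lesssim\sum_{(j_1,j_2)\in\Sigma_i}2^{\beta_i(j,j_1,j_2)}\,M(\widetilde u_{j_1,j_t'})(x)\,M(\widetilde v_{j_2,j_t''})(x),
$$
with an exponent $\beta_i$ dictated by the paraproduct regime. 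For $i=2$ one has $j\sim j_{\max}$ and the gap sum in $j_1-j_2$ is geometric, so Lemma \ref{FS} combined with a H\"older split between the two maximal factors closes the estimate under $m>1-\tfrac n{2p}$. The hard part is the diagonal regime $i=1$: the output scale $j$ can drop arbitrarily below $j_{\max}$, and the potential-type factor arising from $\partial^3(-\Delta)^{-1}$ at small output frequency must be balanced against the weights $2^{2m(j-j_t)}$, $2^{2m'(j-j_t)}$ appearing in $A^{m,q}_{r,p}$, $A^{m',q}_{r,p}$. This balance, together with absorbing the $e^{t|\xi|^{2\gamma}}$ term during the $s$-integration, is precisely what forces the three-fold smallness of $\gamma$, together with $n<p<\infty$ and $m'<\tfrac12-\tfrac n{4p}$; each of the three pieces in $\min\{\cdot,\cdot,\cdot\}$ controls one summation (over $j_{\max}$ at high frequency, over $j_{\max}-j$ at low frequency, and over the Gevrey multiplier). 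Once these converge, a final application of Lemma \ref{FS} translates the pointwise estimate into the required bound $A_{r,p}^{m,q}+A_{r,p}^{m',q}<\infty$.
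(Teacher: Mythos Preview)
Your overall architecture---pass to Meyer wavelet coefficients, derive a kernel bound, collapse the $k_1,k_2$ sums via Lemma \ref{HL max}, then close with Fefferman--Stein (Lemma \ref{FS})---matches the paper. However two of the load-bearing steps in your sketch do not match the paper's argument and, as written, would not close.

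\textbf{The kernel estimate is missing the Gevrey penalty factors.} Your claimed bound $|K(t,s)|\lesssim 2^{j}e^{-c(t-s)2^{2j}}(1+|k-2^{j-j_1}k_1|)^{-N}(1+|k-2^{j-j_2}k_2|)^{-N}$ is too optimistic. When you integrate by parts $N$ times in $\xi$ and $\eta$ to produce the spatial decay, derivatives fall on the Gevrey symbols $e^{t^\gamma|\xi|^{2\gamma}}$ and $e^{-s^\gamma|2^{j}\xi-2^{j''}\eta|^{2\gamma}-s^\gamma 2^{2j''\gamma}|\eta|^{2\gamma}}$, and this unavoidably produces polynomial growth factors $\max\{1,(t2^{2j})^{\gamma N}\}$ and $\max\{1,(s2^{2j'})^{2\gamma N}\}$; see Lemmas \ref{B1} and \ref{B2}. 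These factors, not the exponential $e^{t^\gamma|\xi|^{2\gamma}}$ itself, are what ultimately dictate the three constraints on $\gamma$ (with $N=2n+2$): the bound $\gamma<\tfrac{m}{3N}$ is needed precisely so that $(s2^{2j'})^{2\gamma N}$ can be absorbed by the high-frequency decay $(s2^{2j'})^{-m}$ in the $M_2$ term of Section \ref{sec6}, and similarly for the other two. The subadditivity $|a|^{2\gamma}+|b|^{2\gamma}\ge|a+b|^{2\gamma}$ (valid because $2\gamma\le 1$) is how the paper reduces the exponential part to $e^{\widetilde c(t^\gamma-s^\gamma)2^{2j\gamma}}$, which is then harmless against $e^{-c(t-s)2^{2j}}$.

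\textbf{The paper does not close by a H\"older split of two maximal functions.} Your schematic endpoint $M(\widetilde u_{j_1,j_t'})(x)\,M(\widetilde v_{j_2,j_t''})(x)$ followed by ``a H\"older split between the two maximal factors'' is not how the argument runs, and in the Triebel--Lizorkin--Lorentz setting such a split is not available without further work. Instead the paper uses the pointwise embedding of Lemma \ref{bmm}, ${^{m'}_{m}\dot F}^{\frac np-1,q}_{p,r}\subset B_{m,m'}$, to replace \emph{one} of the two coefficient factors by the scalar bound $|\widetilde v^{\epsilon''}_{j'',k''}(s)|\lesssim (s2^{2j''})^{-m_{j'',s}}2^{(1-\frac n2)j''}$. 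This linearises the estimate in the remaining factor $\widetilde u$ (or $\widetilde v$), after which Lemma \ref{HL max} and Lemma \ref{FS} are applied to a \emph{single} maximal function. The $s$-integral is not dispatched by the blanket bound $\int_0^te^{-c(t-s)2^{2j}}ds\lesssim 2^{-2j}$; rather it is decomposed into dyadic pieces $[2^{-2j_s},2^{2-2j_s}]$, on each of which one takes the sup $\widetilde u_{j',j_s}(x)$, and the summation over $j_s$ is then carefully interchanged with the other sums (Sections \ref{sec6} and \ref{sec7}). Without the $B_{m,m'}$ linearisation and the dyadic-in-$s$ bookkeeping, the constraints $m>1-\tfrac{n}{2p}$ and $m'<\tfrac12-\tfrac{n}{4p}$ have nowhere to enter your argument.
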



\section{ The priori estimates of nonlinear terms}\label{sec5}
In this section, we aim to get the priori estimates related to $B_{l,l',l'',1}(\widetilde{u},\widetilde{v})$ and $B_{l,l',l'',2}(\widetilde{u},\widetilde{v})$. First let us consider $B_{l,l',l'',1}$.
\begin{lemma}\label{B1}
For any $(\epsilon,j,k)\in\Lambda_n$, $l,l',l''=1,\cdots,n\;and\; \epsilon \in \{0,1\}^{n}\backslash \{0\}$, let $f^{\epsilon,1}_{j,k}(t)=\langle B_{l,l',l'', 1}(\widetilde{u},\widetilde{v}), \phi^{\epsilon}_{j, k}\rangle$. Assuming $0<\gamma\leq\frac 12, N\geq0$, there exists the constants $c, \widetilde{c}$ such that if then
\begin{align*}
|f^{\epsilon,1}_{j,k}(t)|\lesssim&\int^{t}_{0}\sum_{j'>j-5}\sum\limits_{j'':|j'-j''|\leq2, \epsilon',\epsilon'', k', k''}\frac{2^{\frac n2 j+j}e^{-c(t-s)2^{2j}}e^{\widetilde{c}(t^{\gamma}2^{2j\gamma}-s^{\gamma}2^{2j\gamma})}}{(1+|2^{j''-j'}k'-k''|)^N(1+|k-2^{j-j'}k'|)^N}\times \\&|\widetilde{u}^{\epsilon'}_{j',k'}(s)| |\widetilde{v}^{\epsilon''}_{j'',k''}(s)|\times\max\{1,(s2^{2j'})^{2\gamma N}\}\times\max\{1,(t2^{2j})^{\gamma N}\}ds.
\end{align*}
\end{lemma}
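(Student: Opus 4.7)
The plan is to reduce the lemma to pointwise estimates on a three-wavelet kernel and then perform a routine integration. Inserting the Meyer expansions of $Q^{\epsilon'}_{j'}\widetilde u$ and $Q^{\epsilon''}_{j''}\widetilde v$ into the definition of $B_{l,l',l'',1}$ and using duality to transfer $\partial_{x_l}\partial_{x_{l'}}\partial_{x_{l''}}(-\Delta)^{-1}$, $e^{(t-s)\Delta}$ and $e^{(-t\Delta)^\gamma}$ onto $\phi^\epsilon_{j,k}$, one writes $f^{\epsilon,1}_{j,k}(t)$ as an integral in $s$ over a sum of $\widetilde u^{\epsilon'}_{j',k'}(s)\widetilde v^{\epsilon''}_{j'',k''}(s)\,K(t,s)$, where
\[ K=\int_{\mathbb{R}^n}\bigl(e^{-(-s\Delta)^\gamma}\phi^{\epsilon'}_{j',k'}\bigr)(y)\bigl(e^{-(-s\Delta)^\gamma}\phi^{\epsilon''}_{j'',k''}\bigr)(y)\,\Phi(y)\,dy, \]
with $\Phi=\partial_{x_l}\partial_{x_{l'}}\partial_{x_{l''}}(-\Delta)^{-1}e^{(t-s)\Delta}e^{(-t\Delta)^\gamma}\phi^\epsilon_{j,k}$. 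The Fourier support of $\hat\phi^{\epsilon'}_{j',k'}$ lies in $|\xi|\sim 2^{j'}$, so the product has Fourier support in $\{|\xi|\lesssim 2^{j'+C}\}$ when $|j'-j''|\leq 2$; comparing with the annular support of $\hat\phi^\epsilon_{j,k}$ shows the kernel vanishes unless $2^j\lesssim 2^{j'}$, which yields the restriction $j'>j-5$.

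The core step is a pointwise bound on each of the three factors via Fourier inversion and integration by parts. For the first, the substitution $\eta=\xi/2^{j'}$ gives
\[ e^{-(-s\Delta)^\gamma}\phi^{\epsilon'}_{j',k'}(y)=2^{\frac{nj'}{2}}\int\hat\phi^{\epsilon'}(\eta)\,e^{-(s2^{2j'})^\gamma|\eta|^{2\gamma}}e^{i(2^{j'}y-k')\cdot\eta}\,d\eta, \]
and $N$ integrations by parts transfer $N$ derivatives onto $\hat\phi^{\epsilon'}(\eta)e^{-\tau|\eta|^{2\gamma}}$, $\tau=(s2^{2j'})^\gamma$. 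On the compact annulus $|\eta|\sim 1$ each derivative of $e^{-\tau|\eta|^{2\gamma}}$ is bounded by a constant times $\max\{1,\tau\}$, giving
\[ |e^{-(-s\Delta)^\gamma}\phi^{\epsilon'}_{j',k'}(y)|\lesssim 2^{\frac{nj'}{2}}(1+|2^{j'}y-k'|)^{-N}\max\{1,(s2^{2j'})^{\gamma N}\}, \]
with an analogous bound for the $\phi^{\epsilon''}_{j'',k''}$ factor; combining the two under $|j'-j''|\leq 2$ produces the announced $\max\{1,(s2^{2j'})^{2\gamma N}\}$. The same scheme applied to $\Phi$ extracts from its symbol the factor $2^j$ (from $i\xi_l\xi_{l'}\xi_{l''}/|\xi|^2$), the dissipation $e^{-c(t-s)2^{2j}}$ (from $e^{-(t-s)|\xi|^2}$), the gain $e^{t^\gamma 2^{2j\gamma}}$ (from $e^{t^\gamma|\xi|^{2\gamma}}$), and the penalty $\max\{1,(t2^{2j})^{\gamma N}\}$, yielding
\[ |\Phi(y)|\lesssim 2^{\frac{nj}{2}+j}e^{-c(t-s)2^{2j}}e^{t^\gamma 2^{2j\gamma}}(1+|2^j y-k|)^{-N}\max\{1,(t2^{2j})^{\gamma N}\}. \]

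It remains to combine and integrate. Since $j',j''\geq j-5$, the attenuations $e^{-cs^\gamma 2^{2j'\gamma}}$ and $e^{-cs^\gamma 2^{2j''\gamma}}$ implicit in the first two bounds dominate $e^{-\widetilde c\,s^\gamma 2^{2j\gamma}}$, which together with the gain in $\Phi$ yields the factor $e^{\widetilde c(t^\gamma-s^\gamma)2^{2j\gamma}}$. For the spatial integral, rescaling $y'=2^{j'}y$ and using $|j'-j''|\leq 2$ together with $j'\geq j-5$ reduces matters to bounding
\[ \int(1+|y'-k'|)^{-N}(1+|2^{j''-j'}y'-k''|)^{-N}(1+|2^{j-j'}y'-k|)^{-N}\,dy' \]
by $(1+|2^{j''-j'}k'-k''|)^{-N}(1+|k-2^{j-j'}k'|)^{-N}$ (after a mild relaxation of $N$), and the volume $2^{-nj'}$ then combines with the amplitudes $2^{nj'/2}\cdot 2^{nj''/2}\cdot 2^{nj/2+j}$ to yield the prefactor $2^{nj/2+j}$ of the lemma. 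The main obstacle is the integration-by-parts bookkeeping: the exponentials $e^{t^\gamma|\xi|^{2\gamma}}$ and $e^{-s^\gamma|\xi|^{2\gamma}}$ must be preserved intact throughout while their derivatives contribute only polynomial losses in $\tau$, which hinges on $0<\gamma\leq 1/2$ ensuring that $|\xi|^{2\gamma}$ is sufficiently smooth on the wavelet annulus for the derivative bounds to be uniform.
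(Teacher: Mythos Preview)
Your outline correctly identifies the frequency restriction $j'>j-5$, the integration-by-parts mechanism for spatial decay, and the combinatorics of the three-factor convolution. The gap is in the step where you combine the Gevrey exponentials. When you bound $\Phi$ pointwise, the growth $e^{t^\gamma|\xi|^{2\gamma}}$ on the scale-$j$ annulus is controlled by the \emph{maximum} of $|\xi|^{2\gamma}$ there, giving $e^{a\,t^\gamma 2^{2j\gamma}}$ with $a=(\sup|\xi|/2^j)^{2\gamma}$. When you bound $e^{-(-s\Delta)^\gamma}\phi^{\epsilon'}_{j',k'}$ and $e^{-(-s\Delta)^\gamma}\phi^{\epsilon''}_{j'',k''}$ pointwise, the surviving decays $e^{-s^\gamma|\xi|^{2\gamma}}$ are controlled by the \emph{minimum} of $|\xi|^{2\gamma}$ on the scale-$j'$ and scale-$j''$ annuli, and after passing to scale $j$ via $j',j''\geq j-O(1)$ you get $e^{-b\,s^\gamma 2^{2j\gamma}}$ with $b$ strictly smaller than $a$ whenever $\gamma$ is not very small. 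The resulting factor $e^{a\,t^\gamma 2^{2j\gamma}-b\,s^\gamma 2^{2j\gamma}}$ is \emph{not} of the form $e^{\widetilde c(t^\gamma-s^\gamma)2^{2j\gamma}}$; it carries an extra $e^{(a-b)(t2^{2j})^\gamma}$ that the heat factor $e^{-c(t-s)2^{2j}}$ cannot absorb when $s$ is close to $t$. So the lemma as stated (for all $0<\gamma\le\frac12$) does not follow.

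The paper avoids this precisely by \emph{not} separating the three factors in physical space. It writes the kernel as a double Fourier integral in $(\xi,\eta)$ and, \emph{before} any integration by parts, applies the subadditivity
\[
|\xi-\eta|^{2\gamma}+|\eta|^{2\gamma}\ \ge\ |\xi|^{2\gamma}\qquad(0<\gamma\le\tfrac12)
\]
so that $e^{t^\gamma|\xi|^{2\gamma}}\,e^{-s^\gamma|\xi-\eta|^{2\gamma}}\,e^{-s^\gamma|\eta|^{2\gamma}}\le e^{(t^\gamma-s^\gamma)|\xi|^{2\gamma}}$ at the \emph{same} frequency $\xi$, with no loss of constant. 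Only after this cancellation does one integrate by parts in $\xi$ and $\eta$ (case-splitting on the sizes of $|k-2^{j-j'}k'|$ and $|2^{j''-j'}k'-k''|$) to generate the polynomial decay and the $\max\{1,(s2^{2j'})^{2\gamma N}\}$, $\max\{1,(t2^{2j})^{\gamma N}\}$ losses. This is also the true reason the hypothesis $\gamma\le\frac12$ enters: it is needed for the triangle inequality of $|\cdot|^{2\gamma}$, not merely for smoothness of $|\xi|^{2\gamma}$ on the annulus (which holds for any $\gamma>0$).
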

\begin{proof}
 The support of the Fourier transform of the paraproduct is in the ring.
Note that
$$ {\rm Supp} \widehat{Q^{\epsilon}_{j} \widetilde{u}} \subset
\Big\{\vert\xi_i\vert\leq \frac{4\pi}{3} \cdot 2^{j}, \mbox{ if } \epsilon_i=0;
\frac{2\pi}{3} \cdot 2^{j}\leq
\vert\xi_i\vert \leq \frac{8\pi}{3} \cdot 2^{j}, \mbox{ if } \epsilon_i=1\Big\}.$$
If $|j-j'|\leq1$ and $\epsilon,\epsilon'\in \{0,1\}^{n}\backslash \{0\}$, the support of the Fourier transform of $Q^{\epsilon}_{j}u Q^{\epsilon'}_{j'} v$ is contained in a ring:
$$\Big\{ \vert\xi_i\vert\leq \frac{8\pi}3 \cdot (2^{j}+2^{j'}), \forall i=1,2,3,..,n\Big\}.$$
According to the continuity of $e^{(-t\Delta)^{\gamma}} $ and $A_{l,l',l''}=e^{(t-s)\Delta} \partial_{x_l}\partial_{x_{l'}}\partial_{x_{l''}}(-\Delta)^{-1}$, we have
\begin{align*}
f^{\epsilon,1}_{j,k}(t)=&\left\langle B_{l,l',l'', 1}(\widetilde{u},\widetilde{v}), \phi^{\epsilon}_{j, k}\right\rangle
=\int^{t}_{0}\sum_{j'>j-5}\sum\limits_{|j'-j''|\leq2, \epsilon',\epsilon'', k', k''}\widetilde{u}^{\epsilon'}_{j',k'}(s) \widetilde{v}^{\epsilon''}_{j'',k''}(s)a^{\epsilon',\epsilon''}_{j',j'',k',k''}(s) ds.
\end{align*}
Here $a^{\epsilon',\epsilon''}_{j',j'',k',k''}(s)=\left\langle  A_{l,l',l''} \Big(
 e^{-(-s\Delta)^{\gamma}}\phi^{\epsilon'}_{j',k'}(x) e^{-(-s\Delta)^{\gamma}}\phi^{\epsilon''}_{j'',k''}(x)\Big) , e^{(-t\Delta)^{\gamma}}\phi^{\epsilon}_{j, k}\right\rangle$. Next, let us concentrate on $a^{\epsilon',\epsilon''}_{j',j'',k',k''}(s)$. We can write
 \begin{align*}
&|a^{\epsilon',\epsilon''}_{j',j'',k',k''}(s)|\\=&|\int e^{-(t-s)|\xi|^2} \xi_l\xi_{l'}\xi_{l''}|\xi|^{-2} \Big(
 \widehat{e^{-(-s\Delta)^{\gamma}}\phi^{\epsilon'}_{j',k'}}(\xi) \ast \widehat{e^{-(-s\Delta)^{\gamma}}\phi^{\epsilon''}_{j'',k''}}(\xi)\Big) e^{i 2^{-j}k\xi} e^{t^{\gamma}|\xi|^{2\gamma}}\widehat{\phi^{\epsilon}_{j, 0}}(\xi)d\xi|\\
 =&|\int e^{-(t-s)|\xi|^2} \xi_l\xi_{l'}\xi_{l''}|\xi|^{-2} e^{i 2^{-j}k\xi}e^{t^{\gamma}|\xi|^{2\gamma}}\widehat{\phi^{\epsilon}_{j, 0}}(\xi)
 \times\\&\int
 e^{-s^{\gamma}|\xi-\eta|^{2\gamma}}\widehat{\phi^{\epsilon'}_{j',k'}}(\xi-\eta) e^{-s^{\gamma}|\eta|^{2\gamma}}\widehat{\phi^{\epsilon''}_{j'',k''}}(\eta)d\eta d\xi|\\
 \lesssim&|\int 2^{\frac n2 j+j}e^{-(t-s)2^{2j}|\xi|^2} \xi_l\xi_{l'}\xi_{l''}|\xi|^{-2} e^{t^{\gamma}2^{2j\gamma}|\xi|^{2\gamma}}e^{i\xi (k-2^{j-j'}k')}\widehat{\phi^{\epsilon}}(\xi) d\xi
 \times\\&\int
 e^{-s^{\gamma}|2^{j}\xi-2^{j''}\eta|^{2\gamma}}\widehat{\phi^{\epsilon'}}(2^{j-j'}\xi-2^{j''-j'}\eta) e^{i\eta(2^{j''-j'}k'-k'')} e^{-s^{\gamma}2^{2j''\gamma}|\eta|^{2\gamma}}\widehat{\phi^{\epsilon''}}(\eta)d\eta|.
 \end{align*} 
 We divide the rest of the argument into four cases.\\
 Case 1: $|2^{j''-j'}k'-k''|\leq2$ and $|k-2^{j-j'}k'|\leq2$. Recall that the support of $\widehat{\phi^{\epsilon}}(\xi)$ is contained in a ring. Moreover, for $0<\gamma\leq\frac12$, we have triangle inequality: $$|2^{j}\xi-2^{j''}\eta|^{2\gamma}+|2^{j''}\eta|^{2\gamma}\geq |2^{j}\xi|^{2\gamma},\quad\forall \xi,\eta\in \mathbb{R}^n.$$ A direct computation derives
\begin{align*}
&|a^{\epsilon',\epsilon''}_{j',j'',k',k''}(s)|\lesssim \frac{2^{\frac n2 j+j}e^{-c(t-s)2^{2j}}e^{\widetilde{c}(t^{\gamma}2^{2j\gamma}-s^{\gamma}2^{2j\gamma})}}{(1+|2^{j''-j'}k'-k''|)^N(1+|k-2^{j-j'}k'|)^N}.
\end{align*}
Case 2: $|2^{j''-j'}k'-k''|>2$ and $|k-2^{j-j'}k'|\leq2$. Denote by $l_{i_0}$ the largest component of $2^{j''-j'}k'-k''$. We have $(1+|l_{i_0}|)^N\sim(1+|2^{j''-j'}k'-k''|)^N$, which implies
\begin{align*}
&|a^{\epsilon',\epsilon''}_{j',j'',k',k''}(s)|\\\lesssim&|\int \frac{2^{\frac n2 j+j}e^{-(t-s)2^{2j}|\xi|^2}}{(1+|2^{j''-j'}k'-k''|)^{N}} \xi_l\xi_{l'}\xi_{l''}|\xi|^{-2} e^{t^{\gamma}2^{2j\gamma}|\xi|^{2\gamma}}e^{i\xi (k-2^{j-j'}k')}\widehat{\phi^{\epsilon}}(\xi) d\xi
 \times\\&\int
 e^{-s^{\gamma}|2^{j}\xi-2^{j''}\eta|^{2\gamma}}\widehat{\phi^{\epsilon'}}(2^{j-j'}\xi-2^{j''-j'}\eta) e^{-s^{\gamma}2^{2j''\gamma}|\eta|^{2\gamma}}\widehat{\phi^{\epsilon''}}(\eta)\partial_{\eta_{i_0}}^Ne^{i\eta(2^{j''-j'}k'-k'')} d\eta|.
 \end{align*}
 By integration-by-parts, we obtain that 
 \begin{align*}
&|a^{\epsilon',\epsilon''}_{j',j'',k',k''}(s)|\\\lesssim&|\int \frac{2^{\frac n2 j+j}e^{-(t-s)2^{2j}|\xi|^2}}{(1+|2^{j''-j'}k'-k''|)^{N}} \xi_l\xi_{l'}\xi_{l''}|\xi|^{-2} e^{t^{\gamma}2^{2j\gamma}|\xi|^{2\gamma}}e^{i\xi (k-2^{j-j'}k')}\widehat{\phi^{\epsilon}}(\xi) d\xi
 \times\\&\int
 \sum_{l_1+l_2+l_3=N}\frac{N!\times 2^{(j''-j')l_2}}{l_1!\times l_2!\times l_3!}\partial_{\eta_{i_0}}^{l_1}e^{-s^{\gamma}|2^{j}\xi-2^{j''}\eta|^{2\gamma}-s^{\gamma}2^{2j''\gamma}|\eta|^{2\gamma}} \times\\&\partial_{i_0}^{l_2}\widehat{\phi^{\epsilon'}}(2^{j-j'}\xi-2^{j''-j'}\eta)\partial_{i_0}^{l_3}\widehat{\phi^{\epsilon''}}(\eta)e^{i\eta(2^{j''-j'}k'-k'')} d\eta|.
 \end{align*}
 The support of $\widehat{\phi^{\epsilon}}(\xi)$ is contained in a ring. Hence $$|\partial_{\eta_{i_0}}^{l_1}e^{-s^{\gamma}|2^{j}\xi-2^{j''}\eta|^{2\gamma}-s^{\gamma}2^{2j''\gamma}|\eta|^{2\gamma}}|\lesssim e^{-s^{\gamma}(2^{j}|\xi|)^{2\gamma}}\times\max\{1,(s2^{2j''})^{l_1\gamma}\},$$ which denotes that
 \begin{align*}
&|a^{\epsilon',\epsilon''}_{j',j'',k',k''}(s)|\lesssim \frac{2^{\frac n2 j+j}e^{-c(t-s)2^{2j}}e^{\widetilde{c}(t^{\gamma}2^{2j\gamma}-s^{\gamma}2^{2j\gamma})}}{(1+|2^{j''-j'}k'-k''|)^N(1+|k-2^{j-j'}k'|)^N}\times\max\{1,(s2^{2j'})^{\gamma N}\}.
\end{align*}
Case 3: $|2^{j''-j'}k'-k''|\leq2$ and $|k-2^{j-j'}k'|>2$. Denote by $k_{j_0}$ the largest component of $k-2^{j-j'}k'$. By the same way we use integration-by-parts to get that
\begin{align*}
&|a^{\epsilon',\epsilon''}_{j',j'',k',k''}(s)|\\\lesssim&|\int \frac{2^{\frac n2 j+j}e^{-(t-s)2^{2j}|\xi|^2}}{(1+|k-2^{j-j'}k'|)^{N}} \xi_l\xi_{l'}\xi_{l''}|\xi|^{-2} e^{t^{\gamma}2^{2j\gamma}|\xi|^{2\gamma}}\widehat{\phi^{\epsilon}}(\xi) \partial_{\xi_{j_0}}^Ne^{i\xi (k-2^{j-j'}k')}d\xi
 \times\\&\int
 e^{-s^{\gamma}|2^{j}\xi-2^{j''}\eta|^{2\gamma}}\widehat{\phi^{\epsilon'}}(2^{j-j'}\xi-2^{j''-j'}\eta) e^{-s^{\gamma}2^{2j''\gamma}|\eta|^{2\gamma}}\widehat{\phi^{\epsilon''}}(\eta)e^{i\eta(2^{j''-j'}k'-k'')} d\eta|
 \\\lesssim&\int \sum_{l_1+l_2+l_3+l_4=N}\frac{2^{\frac n2 j+j}2^{l_4(j-j')}}{(1+|k-2^{j-j'}k'|)^{N}} |\partial_{\xi_{j_0}}^{l_1}(e^{-(t-s)2^{2j}|\xi|^2}\xi_l\xi_{l'}\xi_{l''}|\xi|^{-2} e^{t^{\gamma}2^{2j\gamma}|\xi|^{2\gamma}})|d\xi
 \\&\int
|\partial_{j_0}^{l_2}\widehat{\phi^{\epsilon}}(\xi)\partial_{\xi_{j_0}}^{l_3}(e^{-s^{\gamma}|2^{j}\xi-2^{j''}\eta|^{2\gamma}})\partial_{j_0}^{l_4}\widehat{\phi^{\epsilon'}}(2^{j-j'}\xi-2^{j''-j'}\eta) e^{-s^{\gamma}2^{2j''\gamma}|\eta|^{2\gamma}}\widehat{\phi^{\epsilon''}}(\eta)| d\eta.
 \end{align*}
 We have to use the fact that the support of $\widehat{\phi^{\epsilon}}(\xi)$ is contained in a ring and get that there exists a small constant $c_1>0$ satisfying
 \begin{align*}
 |\partial_{\xi_{j_0}}^{l_1}(e^{-(t-s)2^{2j}|\xi|^2}\xi_l\xi_{l'}\xi_{l''}|\xi|^{-2} e^{t^{\gamma}2^{2j\gamma}|\xi|^{2\gamma}})|&\lesssim e^{-c_1(t-s)2^{2j}|\xi|^2}e^{t^{\gamma}2^{2j\gamma}|\xi|^{2\gamma}} \max\{1,(t2^{2j})^{\gamma l_1}\},\\
 |\partial_{\xi_{j_0}}^{l_3}(e^{-s^{\gamma}|2^{j}\xi-2^{j''}\eta|^{2\gamma}})|&\lesssim e^{-s^{\gamma}|2^{j}\xi-2^{j''}\eta|^{2\gamma}}\max\{1,(s2^{2j''})^{\gamma l_3}\} .
 \end{align*}
 Consequently, we have the constants $c, \widetilde{c}>0$ such that
 \begin{align*}
&|a^{\epsilon',\epsilon''}_{j',j'',k',k''}(s)|\\\lesssim& \frac{2^{\frac n2 j+j}e^{-c(t-s)2^{2j}}e^{\widetilde{c}(t^{\gamma}2^{2j\gamma}-s^{\gamma}2^{2j\gamma})}}{(1+|2^{j''-j'}k'-k''|)^N(1+|k-2^{j-j'}k'|)^N}\times\max\{1,(s2^{2j'})^{\gamma N}\}\times\max\{1,(t2^{2j})^{\gamma N}\}.
\end{align*}
Case 4: $|2^{j''-j'}k'-k''|>2$ and $|k-2^{j-j'}k'|>2$. In a similar manner to treat Case 2 and Case 3. We obtain
\begin{align*}
&|a^{\epsilon',\epsilon''}_{j',j'',k',k''}(s)|\\\lesssim&|\int \frac{2^{\frac n2 j+j}e^{-(t-s)2^{2j}|\xi|^2}}{(1+|k-2^{j-j'}k'|)^{N}(1+|2^{j''-j'}k'-k''|)^N} \xi_l\xi_{l'}\xi_{l''}|\xi|^{-2} e^{t^{\gamma}2^{2j\gamma}|\xi|^{2\gamma}} \partial_{\xi_{j_0}}^Ne^{i\xi (k-2^{j-j'}k')}d\xi
 \\&\int\widehat{\phi^{\epsilon}}(\xi) 
 e^{-s^{\gamma}|2^{j}\xi-2^{j''}\eta|^{2\gamma}}\widehat{\phi^{\epsilon'}}(2^{j-j'}\xi-2^{j''-j'}\eta) e^{-s^{\gamma}2^{2j''\gamma}|\eta|^{2\gamma}}\widehat{\phi^{\epsilon''}}(\eta)\partial_{\eta_{i_0}}^Ne^{i\eta(2^{j''-j'}k'-k'')} d\eta|.
 \end{align*}
 As in Case 2 and Case 3, applying integration-by-parts twice and utilizing the fact that the support of $\widehat{\phi^{\epsilon}}(\xi)$ is a ring again, we finally get that
 \begin{align*}
&|a^{\epsilon',\epsilon''}_{j',j'',k',k''}(s)|\\\lesssim& \frac{2^{\frac n2 j+j}e^{-c(t-s)2^{2j}}e^{\widetilde{c}(t^{\gamma}2^{2j\gamma}-s^{\gamma}2^{2j\gamma})}}{(1+|2^{j''-j'}k'-k''|)^N(1+|k-2^{j-j'}k'|)^N}\times\max\{1,(s2^{2j'})^{2\gamma N}\}\times\max\{1,(t2^{2j})^{\gamma N}\}.
\end{align*}
This gives the estimate of $a^{\epsilon',\epsilon''}_{j',j'',k',k''}(s)$. We substitute it into $f^{\epsilon,1}_{j,k}(t)$ and then complete the proof.
\end{proof}
By the same method we obtain the following estimates for $B_{l,l',l'',2}$.  
\begin{lemma}\label{B2}
For any $(\epsilon,j,k)\in\Lambda_n$, $l,l',l''=1,\cdots,n$, let $f^{\epsilon,2}_{j,k}(t)=\langle B_{l,l',l'', 2}(\widetilde{u},\widetilde{v}), \phi^{\epsilon}_{j, k}\rangle$. Assuming $0<\gamma\leq\frac 12, N>0$, there exists the constants $c, \widetilde{c}$ such that
\begin{align*}
|f^{\epsilon,2}_{j,k}(t)|\lesssim&\int^{t}_{0}\sum_{j':|j-j'|\leq2}\sum\limits_{j''\leq j'-3, \epsilon',\epsilon'', k', k''}\frac{2^{\frac n2 j''+j}e^{-c(t-s)2^{2j}}e^{\widetilde{c}(t^{\gamma}2^{2j\gamma}-s^{\gamma}2^{2j\gamma})}}{(1+|2^{j''-j'}k'-k''|)^N(1+|k-2^{j-j'}k'|)^N}\times \\&|\widetilde{u}^{\epsilon'}_{j',k'}(s)| |\widetilde{v}^{\epsilon''}_{j'',k''}(s)|\times\max\{1,(t2^{2j})^{\gamma N}\}\max\{1,(s2^{2j''})^{\gamma N}\}ds.
\end{align*}
\end{lemma}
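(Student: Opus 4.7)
The proof follows the same structure as the proof of Lemma~\ref{B1}, with $j''$ now playing the role of the low-frequency index (in place of $j'$ there) and $j'$ with $|j-j'|\leq 2$ being the output scale that matches $j$. First, pair with $\phi^{\epsilon}_{j,k}$ to write
$$f^{\epsilon,2}_{j,k}(t) = \int_0^t \sum_{j',j''}\sum_{\epsilon',\epsilon'',k',k''} \widetilde u^{\epsilon'}_{j',k'}(s)\,\widetilde v^{\epsilon''}_{j'',k''}(s)\,a^{\epsilon',\epsilon''}_{j',j'',k',k''}(s)\,ds,$$
where $a^{\epsilon',\epsilon''}_{j',j'',k',k''}(s) = \langle A_{l,l',l''}(e^{-(-s\Delta)^\gamma}\phi^{\epsilon'}_{j',k'}\cdot e^{-(-s\Delta)^\gamma}\phi^{\epsilon''}_{j'',k''}),\, e^{(-t\Delta)^\gamma}\phi^{\epsilon}_{j,k}\rangle$ and $A_{l,l',l''}=e^{(t-s)\Delta}\partial_{x_l}\partial_{x_{l'}}\partial_{x_{l''}}(-\Delta)^{-1}$. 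Since $j''\leq j'-3$ and $\epsilon'\neq 0$, the Fourier support of $Q^{\epsilon'}_{j'}\widetilde u\cdot Q^{\epsilon''}_{j''}\widetilde v$ lies in the annulus at scale $2^{j'}$, so $a$ vanishes unless $|j-j'|\leq 2$; this yields the index constraints in the statement.

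Second, express $a^{\epsilon',\epsilon''}_{j',j'',k',k''}(s)$ as a double Fourier integral in $(\xi,\eta)$ via Parseval and change variables $\xi\mapsto 2^j\xi$ in the outer variable and $\eta\mapsto 2^{j''}\eta$ in the inner convolution variable (the one attached to the low-frequency factor). The Jacobians, the wavelet normalizations $2^{-nj/2}$ of $\widehat{\phi^{\epsilon}_{j,k}}$ and $2^{-nj''/2}$ of $\widehat{\phi^{\epsilon''}_{j'',k''}}$, and the symbol $\xi_l\xi_{l'}\xi_{l''}|\xi|^{-2}\sim 2^j$, combine to produce the prefactor $2^{\frac n2 j''+j}$ (note that the low-frequency index $j''$ appears here, in contrast to $j'$ in Lemma~\ref{B1}). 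The remaining exponentials read $e^{-(t-s)2^{2j}|\xi|^2}\,e^{t^\gamma 2^{2j\gamma}|\xi|^{2\gamma}}$ in the outer integrand and $e^{-s^\gamma|2^j\xi-2^{j''}\eta|^{2\gamma}}\,e^{-s^\gamma 2^{2j''\gamma}|\eta|^{2\gamma}}$ in the inner, with oscillatory phases $e^{i\xi(k-2^{j-j'}k')}$ and $e^{i\eta(2^{j''-j'}k'-k'')}$.

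Third, run the four-case analysis from Lemma~\ref{B1} according to whether each of $|2^{j''-j'}k'-k''|$ and $|k-2^{j-j'}k'|$ is $\leq 2$ or $>2$. In the baseline small-small case, apply the triangle inequality $|2^j\xi-2^{j''}\eta|^{2\gamma}+|2^{j''}\eta|^{2\gamma}\geq (2^j|\xi|)^{2\gamma}$ (valid for $0<\gamma\leq 1/2$) to absorb the outer Gevrey multiplier, then use the ring support of $\widehat{\phi^{\epsilon}}$ to extract $e^{-c(t-s)2^{2j}}\,e^{\widetilde c(t^\gamma 2^{2j\gamma}-s^\gamma 2^{2j\gamma})}$. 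In the large cases, integrate by parts $N$ times in $\eta$ (against $e^{i\eta(2^{j''-j'}k'-k'')}$) or in $\xi$ (against $e^{i\xi(k-2^{j-j'}k')}$); each $\partial_\eta$ on $e^{-s^\gamma|2^j\xi-2^{j''}\eta|^{2\gamma}}$ contributes at most $\max\{1,(s2^{2j''})^{\gamma}\}$, and each $\partial_\xi$ on $e^{t^\gamma 2^{2j\gamma}|\xi|^{2\gamma}}$ contributes at most $\max\{1,(t2^{2j})^{\gamma}\}$. Summing the four case bounds yields the claimed estimate.

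The main technical delicacy will be the scale asymmetry $2^{j''}\ll 2^j$ (absent in Lemma~\ref{B1}, where $j\sim j'\sim j''$): one must verify that $\eta$-derivatives hitting the coupling exponential $e^{-s^\gamma|2^j\xi-2^{j''}\eta|^{2\gamma}}$ produce only the sharper low-scale growth $2^{2j''\gamma}$ per derivative rather than $2^{2j\gamma}$. This holds because by the chain rule $\partial_{\eta_i}|2^j\xi-2^{j''}\eta|^{2\gamma}$ brings down exactly the factor $2^{j''}$, with the residual $|2^j\xi-2^{j''}\eta|^{2\gamma-1}$ controlled (for $\gamma\leq 1/2$) either by the exponential decay itself or, on the ring support of $\widehat{\phi^{\epsilon}}(\xi)$, by the lower bound $|\xi|\gtrsim 1$; this is what permits the stated $\max\{1,(s2^{2j''})^{\gamma N}\}$ in place of the cruder $\max\{1,(s2^{2j})^{\gamma N}\}$, and will be crucial when summing over the low frequencies $j''$ in the subsequent norm estimates.
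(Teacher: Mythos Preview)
Your proposal is correct and follows exactly the paper's approach (the paper itself simply writes ``By the same method we obtain the following estimates for $B_{l,l',l'',2}$''). The one point worth making explicit is that in the $\xi$-integration by parts the Leibniz rule also places derivatives on the coupling exponential $e^{-s^\gamma|2^j\xi-2^{j''}\eta|^{2\gamma}}$, and since $|2^j\xi-2^{j''}\eta|\sim 2^{j'}\sim 2^j$ on the ring support of $\widehat{\phi^{\epsilon'}}$, each such derivative contributes the \emph{high}-scale factor $(s2^{2j})^\gamma$; because $s\leq t$ this is bounded by $(t2^{2j})^\gamma$ and is absorbed into the stated $\max\{1,(t2^{2j})^{\gamma N}\}$, which is why no separate $(s2^{2j})$ factor appears in the final estimate.
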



\section{ The proof of theorem \ref{bili}(i)}\label{sec6}

In Sections \ref{sec6} and \ref{sec7}, we only consider $1\leq q<\infty$. Since that the same method can be applied to the case of $q=\infty$. First, let us consider the indices where $0<t2^{2j}\leq 1$. By lemma \ref{B1}, we have
\begin{align*}
|f^{\epsilon,1}_{j,k}(t)|\lesssim&\int^{t}_{0}\sum_{j'>j-5}\sum\limits_{j'':|j'-j''|\leq2, \epsilon',\epsilon'', k', k''}\frac{2^{\frac n2 j+j}}{(1+|2^{j''-j'}k'-k''|)^N(1+|k-2^{j-j'}k'|)^N}\times \\&|\widetilde{u}^{\epsilon'}_{j',k'}(s)| |\widetilde{v}^{\epsilon''}_{j'',k''}(s)|\times\max\{1,(s2^{2j'})^{2\gamma N}\}ds.
\end{align*}

In the remaining part of this article, we always set $N=2n+2$. Write the integral as the sum of two terms: $\int_{0}^{2^{-2j'}}$ and $\int_{2^{-2j'}}^{t}$. Denote them by $f^{\epsilon,1,1}_{j,k}(t)$ and $f^{\epsilon,1,2}_{j,k}(t)$ respectively. Since that we can deal with the case of $0<t2^{2j'}< 1$ in the similar manner, here we only consider $t2^{2j'}\geq 1$, namely, $j'\geq j_t-1$.
 
For the first term, we have $0<s \leq 2^{-2j'}$ and $ |v^{\epsilon''}_{j'',k''}(s)|\lesssim(s2^{2j''})^{-m'} 2^{(1-\frac{n}{2})j''}$. Which derives that
\begin{align*}
|f^{\epsilon,1,1}_{j,k}(t)|\lesssim&\sum_{j'\geq j_t-1}\int^{2^{-2j'}}_{0}\sum\limits_{j'':|j'-j''|\leq2, \epsilon',k', k''}\frac{2^{\frac n2 j+j}}{(1+|2^{j''-j'}k'-k''|)^N(1+|k-2^{j-j'}k'|)^N}\times \\&|\widetilde{u}^{\epsilon'}_{j',k'}(s)| (s2^{2j'})^{-m'} 2^{(1-\frac{n}{2})j'}ds\\
\lesssim&\sum_{j'\geq j_t-1}\int^{2^{-2j'}}_{0}\sum\limits_{ \epsilon', k'}\frac{2^{\frac n2 (j-j')+j+j'}}{(1+|k-2^{j-j'}k'|)^N}\times|\widetilde{u}^{\epsilon'}_{j',k'}(s)| (s2^{2j'})^{-m'} ds.
\end{align*}

For any $j',j_s\in\mathbb{Z}$, let $\widetilde{u}_{j'}(s,x) = 2^{\frac{n}{2}j'} \sum\limits_{(\epsilon',k')\in \Gamma}|\widetilde{u}^{\epsilon'}_{j',k'}(s)|\chi(2^{j'}x-k')$, $\widetilde{u}_{j',j_s}(x)=\sup_{2^{-2j_s}\leq s<2^{2-2j_s}} \widetilde{u}_{j'}(s,x)$, $(\widetilde{u}^{\epsilon'}_{j',k'})_{j_s}=\sup_{2^{-2j_s}\leq s<2^{2-2j_s}} |\widetilde{u}^{\epsilon'}_{j',k'}(s)|$. According to lemma \ref{HL max} we obtain
\begin{align*}
f^{1,1}_{j,j_t}(x)=&\sup_{2^{-2j_t}\leq t<2^{2-2j_t}}f^{1,1}_{j}(t,x)=\sup_{2^{-2j_t}\leq t<2^{2-2j_t}}2^{\frac{n}{2}j} \sum\limits_{(\epsilon,k)\in \Gamma}|f^{\epsilon,1,1}_{j,k}(t)|\chi(2^{j}x-k)\\
\lesssim &   \sup_{2^{-2j_t}\leq t<2^{2-2j_t}} \sum\limits_{(\epsilon,k)\in \Gamma} \sum\limits_{j'\geq  j_t-1 } \sum_{j_s\geq j'+1}\int^{2^{2-2j_s}}_{2^{-2j_s}} 2^{n(j-j')+j+j'}(s2^{2j'})^{-m'}
 ds\\&\times\sum\limits_{\epsilon',k'}\frac{2^{\frac n2j'} (\widetilde{u}^{\epsilon'}_{j',k'})_{j_s}}
{\left(1+ \left\vert2^{j-j'}k'-k\right\vert\right)^{N} }\chi(2^{j}x-k)\\
\lesssim &   \sup_{2^{-2j_t}\leq t<2^{2-2j_t}} \sum\limits_{(\epsilon,k)\in \Gamma} \sum\limits_{j'\geq  j_t-1 } \sum_{j_s\geq j'+1}\int^{2^{2-2j_s}}_{2^{-2j_s}} 2^{n(j-j')+j+j'}(s2^{2j'})^{-m'}
 ds\\&\times 2^{n(j'-j)}M(\widetilde{u}_{j',j_s})\chi(2^{j}x-k)\\
\lesssim &    \sum\limits_{j'\geq  j_t-1 } \sum_{j_s\geq j'+1}2^{j-j'}2^{2(j'-j_s)(1-m')} M(\widetilde{u}_{j',j_s}).
\end{align*}

Define $$A_{r,p}^{m',q,1,1} =\sup_{j_t\in\mathbb{Z}}\sum_u2^{ur}|\{x:\sum_{j< j_t}2^{2(j-j_t)m'q}2^{jq(\frac np-1)}(f^{1,1}_{j,j_t}(x))^q>2^{qu}\}|^{\frac rp}.$$ That is to say, for $r\leq p$, according to lemma \ref{le:2.4} and \ref{FS} we have
\begin{align*}
A_{r,p}^{m',q,1,1} \lesssim&\sup_{j_t\in\mathbb{Z}}\sum_u2^{ur}|\{x:\sum_{j< j_t}\sum\limits_{j'\geq  j_t-1 } \sum_{j_s\geq j'+1}2^{2(j-j_t)m'q}2^{jq(\frac np-1)}2^{q(j-j')}2^{2q(j'-j_s)(1-m')}\times\\& M(\widetilde{u}_{j',j_s})^q2^{\delta q(j_s-j')}>2^{qu}\}|^{\frac rp}\\
\lesssim&\sup_{j_t\in\mathbb{Z}}\sum_u2^{ur}(\sum\limits_{j_s\geq  j_t}|\{x:\sum_{j< j_t} \sum_{j_t-1\leq j'\leq j_s-1}2^{2(j-j_t)m'q}2^{jq(\frac np-1)}2^{q(j-j')}\times\\& M(\widetilde{u}_{j',j_s})^q2^{\delta q(j_s-j')+\delta'(j_s-j_t)}2^{2q(j'-j_s)(1-m')}>2^{qu}\}|)^{\frac rp}\\
\lesssim&\sup_{j_t\in\mathbb{Z}}\sum_u2^{ur}(\sum\limits_{j_s\geq  j_t}|\{x: \sum_{j_t-1\leq j'\leq j_s-1}2^{j_tq(\frac np-1)}2^{q(j_t-j')}2^{2q(j'-j_s)(1-m')} \times\\&M(\widetilde{u}_{j',j_s})^q2^{\delta q(j_s-j')+\delta'(j_s-j_t)}>2^{qu}\}|)^{\frac rp}\\
\lesssim&\sup_{j_s\in\mathbb{Z}}\sum_u2^{ur}|\{x:\sum_{j'\leq j_s}\widetilde{u}_{j',j_s}(x)^q2^{2m'q(j'-j_s)}2^{j'q(\frac np-1)}>2^{qu}\}|^{\frac rp}.
\end{align*}
Here we need $0<\delta'<\frac{qn}p$ and $0<\delta q<2q-4m'q-\delta'$.

For $r> p$, we can also obtain the same result through H\"older inequality. We omit this part.

For the second term, according to lemma \ref{bmm}, $s2^{2j'}>1$ gives that $ |\widetilde{v}^{\epsilon''}_{j'',k''}(s)|\lesssim(s2^{2j''})^{-m} 2^{(1-\frac{n}{2})j''}$.  We obtain the estimation of $f^{1,2}_{j,j_t}(x)$ in the same way:
\begin{align*}
f^{1,2}_{j,j_t}(x)\lesssim    \sum\limits_{j'\geq  j_t-1 } \sum_{j_t\leq j_s\leq j'}2^{j-j'}2^{2(j'-j_s)(1+2\gamma N-m)} M(\widetilde{u}_{j',j_s}).
\end{align*}

Define $$A_{r,p}^{m',q,1,2} =\sup_{j_t\in\mathbb{Z}}\sum_u2^{ur}|\{x:\sum_{j< j_t}2^{2(j-j_t)m'q}2^{jq(\frac np-1)}(f^{1,2}_{j,j_t}(x))^q>2^{qu}\}|^{\frac rp}.$$
We shall adopt the same procedure as in the proof of $A_{r,p}^{m',q,1,2}$. Since $ 0<\gamma<\frac{n+4mp-2p}{4Np}$, there exist $\delta$ and $\delta'$ such that $0<\delta'<\frac{nq}p-\delta q$ and $0<\delta\leq \frac np+4m-2-4N\gamma$. In which case we can control it with $\widetilde{u}^{\epsilon'}_{j',k'}(s)$:
\begin{align*}
A_{r,p}^{m',q,1,2}\lesssim&\;_{j_t\in\mathbb{Z}}\sum_u2^{ur}|\{x:\sum_{j< j_t}2^{2(j-j_t)m'q}2^{jq(\frac np-1)}\times\\&( \sum\limits_{j'\geq  j_t-1 } \sum_{j_t\leq j_s\leq j'}2^{j-j'}2^{2(j'-j_s)(1+2\gamma N-m)} M(\widetilde{u}_{j',j_s}))^q>2^{qu}\}|^{\frac rp}\\
\lesssim&\;\sup_{j_t\in\mathbb{Z}}\sum_u2^{ur}\sum_{ j_s\geq j_t}|\{x:\sum\limits_{j'\geq  j_s } 2^{j_tq(\frac np-1)} 2^{q(j_t-j')}2^{2q(j'-j_s)(1+2\gamma N-m)}\times\\&2^{\delta q(j'-j_t)+\delta'(j_s-j_t)} M(\widetilde{u}_{j',j_s})^q>2^{qu}\}|^{\frac rp}\\\lesssim&\;\sup_{j_s\in\mathbb{Z}}\sum_u2^{ur}|\{x:\sum_{j'\geq j_s}\widetilde{u}_{j',j_s}(x)^q2^{2mq(j'-j_s)}2^{j'q(\frac np-1)}>2^{qu}\}|^{\frac rp}.
\end{align*}

For the simplicity, here we omit the case of $r>p$.

Now we aim to consider the case of $t2^{2j}>1$. Lemma \ref{B1} gives that
\begin{align*}
|f^{\epsilon,1}_{j,k}(t)|\lesssim&\int^{t}_{0}\sum_{j'>j-5}\sum\limits_{j'':|j'-j''|\leq2, \epsilon',\epsilon'', k', k''}\frac{2^{\frac n2 j+j}e^{-c(t-s)2^{2j}}e^{\widetilde{c}(t^{\gamma}2^{2j\gamma}-s^{\gamma}2^{2j\gamma})}(t2^{2j})^{\gamma N}}{(1+|2^{j''-j'}k'-k''|)^N(1+|k-2^{j-j'}k'|)^N}\times \\&|\widetilde{u}^{\epsilon'}_{j',k'}(s)| |\widetilde{v}^{\epsilon''}_{j'',k''}(s)|\times\max\{1,(s2^{2j'})^{2\gamma N}\}ds.
\end{align*}
 Here we assume that $2^{-2j'}< \frac t2$, namely, $t2^{2j}> 2^9$, since that proof for $1<t2^{2j}\leq 2^9$ is easy. After dividing the integral into $\int^{2^{-2j'}}_{0}$ and $\int^{t}_{2^{-2j'}}$, we denote the two terms by $f^{\epsilon,1,3}_{j,k}(t)$ and $f^{\epsilon,1,4}_{j,k}(t)$ respectively.
 
We first consider the estimation of $f^{\epsilon,1,3}_{j,k}(t)$. Notice that in this case there is only one more term $e^{-c(t-s)2^{2j}}e^{\widetilde{c}(t^{\gamma}2^{2j\gamma}-s^{\gamma}2^{2j\gamma})}(t2^{2j})^{\gamma N}$ than in the case $0<t2^{2j}\leq 1$. Using the same method, we get that there exists a constant $c'>0$ satisfying
\begin{align*}
f^{1,3}_{j,j_t}(x)=&\sup_{2^{-2j_t}\leq t<2^{2-2j_t}}2^{\frac{n}{2}j} \sum\limits_{(\epsilon,k)\in \Gamma}|f^{\epsilon,1,3}_{j,k}(t)|\chi(2^{j}x-k)\\
\lesssim &    \sum\limits_{j'\geq  j_t-1 } \sum_{j_s\geq j'+1}2^{j-j'}2^{2(j'-j_s)(1-m')}2^{2(j-j_t)\gamma N}e^{-c'2^{2(j-j_t)}} M(\widetilde{u}_{j',j_s}).
\end{align*}
 
Define $$A_{r,p}^{m,q,1,3} =\sup_{j_t\in\mathbb{Z}}\sum_u2^{ur}|\{x:\sum_{j\geq j_t}2^{2(j-j_t)mq}2^{jq(\frac np-1)}(f^{1,3}_{j,j_t}(x))^q>2^{qu}\}|^{\frac rp}.$$
Denote $0<\delta< \min\{2q-4m'q, \frac {qn}p\}$. It is sufficient to get that
\begin{align*}
A_{r,p}^{m,q,1,3}\lesssim&\sup_{j_t\in\mathbb{Z}}\sum_u2^{ur}|\{x:\sum_{j\geq j_t}2^{2(j-j_t)mq}2^{jq(\frac np-1)}\times\\&( \sum\limits_{j'\geq  j_t-1 } \sum_{j_s\geq j'+1}2^{j-j'}2^{2(j'-j_s)(1-m')}2^{2(j-j_t)\gamma N}e^{-c'2^{2(j-j_t)}} M(\widetilde{u}_{j',j_s}))^q>2^{qu}\}|^{\frac rp}\\
\lesssim&\sup_{j_t\in\mathbb{Z}}\sum_u2^{ur}\sum_{j_s\geq j_t}|\{x:\sum_{j\geq j_t}\sum\limits_{j_t-1\leq j'\leq  j_s-1 }2^{2(j-j_t)mq}2^{jq(\frac np-1)}  2^{q(j-j')}\times\\&2^{2q(j'-j_s)(1-m')}2^{2q(j-j_t)\gamma N}e^{-c'q2^{2(j-j_t)}} M(\widetilde{u}_{j',j_s})^q2^{\delta (j_s-j_t)}>2^{qu}\}|^{\frac rp}\\
\lesssim&\sup_{j_t\in\mathbb{Z}}\sum_u2^{ur}\sum_{j_s\geq j_t}|\{x:\sum\limits_{j_t-1\leq j'\leq  j_s-1 }2^{j_tq(\frac np-1)}  2^{q(j_t-j')}2^{2q(j'-j_s)(1-m')}\times\\& M(\widetilde{u}_{j',j_s})^q2^{\delta (j_s-j_t)}>2^{qu}\}|^{\frac rp}\\
\lesssim&\sup_{j_s\in\mathbb{Z}}\sum_u2^{ur}|\{x:\sum_{j'\leq j_s}\widetilde{u}_{j',j_s}(x)^q2^{2m'q(j'-j_s)}2^{j'q(\frac np-1)}>2^{qu}\}|^{\frac rp}.
\end{align*}
For the simplicity, we may take $r\leq p$.
 
For the term of $f^{\epsilon,1,4}_{j,k}(t)$, we also omit the situation of $r> p$. An argument similar to the one used in the proof of $f^{\epsilon,1,2}_{j,k}(t)$ shows that
\begin{align*}
f^{1,4}_{j,j_t}(x)\lesssim&    \sup_{2^{-2j_t}\leq t<2^{2-2j_t}}2^{\frac{n}{2}j} \sum\limits_{(\epsilon,k)\in \Gamma}\int^{t}_{2^{-2j'}}\sum_{j'>j-5}\sum\limits_{j'':|j'-j''|\leq2, \epsilon', k'}|\widetilde{u}^{\epsilon'}_{j',k'}(s)| (s2^{2j''})^{-m} 2^{(1-\frac{n}{2})j''}\times \\&\frac{2^{\frac n2 j+j}e^{-c(t-s)2^{2j}}e^{\widetilde{c}(t^{\gamma}2^{2j\gamma}-s^{\gamma}2^{2j\gamma})}(t2^{2j})^{\gamma N}}{(1+|k-2^{j-j'}k'|)^N}\times\max\{1,(s2^{2j'})^{2\gamma N}\}ds\chi(2^{j}x-k).
\end{align*}
In order to make the estimation more precise, we have to divide the integral into $\int_{2^{-2j'}}^{\frac t2}$ and $\int^{t}_{\frac t2}$. Dividing the $\int_{2^{-2j'}}^{\frac t2}$ into dyadic intervals, there exists a positive constant $c'$ such that
\begin{align*}
f^{1,4}_{j,j_t}(x)\lesssim&    \sup_{2^{-2j_t}\leq t<2^{2-2j_t}}2^{\frac{n}{2}j} \sum_{j'>j-5}(\sum_{j_t\leq j_s\leq j'}\int^{2^{2-2j_s}}_{2^{-2j_s}}+\int^{t}_{\frac t2})M(\widetilde{u}_{j'})2^{-\frac{nj'}2}2^{n(j'-j)}\times \\&2^{\frac n2 j+j}e^{-c(t-s)2^{2j}}e^{\widetilde{c}(t^{\gamma}2^{2j\gamma}-s^{\gamma}2^{2j\gamma})}(t2^{2j})^{\gamma N} (s2^{2j'})^{-m} 2^{(1-\frac{n}{2})j'}(s2^{2j'})^{2\gamma N}ds\\
\lesssim&    \sum_{j'>j-5}\sum_{j_t\leq j_s\leq j'}M(\widetilde{u}_{j',j_s})e^{-c'2^{2(j-j_t)}} 2^{2(j'-j_s)(2\gamma N-m)} 2^{j+j'-2j_s}+\\
&\sum_{j'>j-5}\max_{j_s=j_t\; or \; j_t+1}M(\widetilde{u}_{j',j_s})2^{j'-j}2^{2(j-j_t)\gamma N} 2^{2(j'-j_s)(2\gamma N-m)} .
\end{align*}

Define $$A_{r,p}^{m,q,1,4} =\sup_{j_t\in\mathbb{Z}}\sum_u2^{ur}|\{x:\sum_{j\geq j_t}2^{2(j-j_t)mq}2^{jq(\frac np-1)}(f^{1,4}_{j,j_t}(x))^q>2^{qu}\}|^{\frac rp}.$$
Then
\begin{align*}
A_{r,p}^{m,q,1,4} \lesssim&\sup_{j_t\in\mathbb{Z}}\sum_u2^{ur}|\{x:\sum_{j\geq j_t}2^{2(j-j_t)mq}2^{jq(\frac np-1)}(\sum_{j'>j-5}\sum_{j_t\leq j_s\leq j'}M(\widetilde{u}_{j',j_s})e^{-c'2^{2(j-j_t)}} \times \\&2^{2(j'-j_s)(2\gamma N-m)} 2^{j+j'-2j_s})^q>2^{qu}\}|^{\frac rp}+\\
&\sup_{j_t\in\mathbb{Z}}\sum_u2^{ur}|\{x:\sum_{j\geq j_t}2^{2(j-j_t)mq}2^{jq(\frac np-1)}(\sum_{j'>j-5}\max_{j_s=j_t\; or \; j_t+1}M(\widetilde{u}_{j',j_s})2^{j'-j}\times \\&2^{2(j-j_t)\gamma N} 2^{2(j'-j_s)(2\gamma N-m)} )^q>2^{qu}\}|^{\frac rp}=M_1+M_2.
\end{align*}
We separate the two parts of the above inequality and recall that $0<\gamma<\min\{\frac n{4pN}-\frac 1{2N}+\frac mN,\frac m{3N}\}$. For $M_1$, there exists a constant $0<\delta<\min\{\frac {n}{p},\frac np-2+2m-4\gamma N\}$ satisfying that
\begin{align*}
M_1\!\!\lesssim&\sup_{j_t\in\mathbb{Z}}\sum_u2^{ur}\sum_{j_s\geq j_t}|\{x:\sum_{j\geq j_t}\sum_{j'\geq j_s}2^{2(j-j_t)mq}2^{jq(\frac np-1)}M(\widetilde{u}_{j',j_s})^qe^{-qc'2^{2(j-j_t)}} \times \\&2^{2q(j'-j_s)(2\gamma N-m)} 2^{q(j+j'-2j_s)}(j'-j_t)^q2^{\delta q(j'-j)}>2^{qu}\}|^{\frac rp}\\
\lesssim&\sup_{j_t\in\mathbb{Z}}\sum_u2^{ur}\sum_{j_s\geq j_t}|\{x:\sum_{j'\geq j_s}2^{j_tq(\frac np-1)}M(\widetilde{u}_{j',j_s})^q 2^{2q(j'-j_s)(2\gamma N-m)} 2^{q(j_t+j'-2j_s)}\times \\&(j'-j_t)^q2^{\delta q(j'-j_t)}>2^{qu}\}|^{\frac rp}\\
\lesssim&\sup_{j_s\in\mathbb{Z}}\sum_u2^{ur}|\{x:\sum_{j'\geq j_s}\widetilde{u}_{j',j_s}(x)^q2^{2mq(j'-j_s)}2^{j'q(\frac np-1)}>2^{qu}\}|^{\frac rp}.
\end{align*}
The estimation of $M_2$ completes the proof of theorem \ref{bili} (i). For simplicity, we assume that $j_s=j_t$.
\begin{flalign*}
\begin{split}
M_2\lesssim&\sup_{j_s\in\mathbb{Z}}\sum_u2^{ur}|\{x:\sum_{j'\geq j_s-4}\sum_{j_s\leq j\leq j'+4}2^{2(j-j_s)mq}2^{jq(\frac np-1)}M(\widetilde{u}_{j',j_s})^q2^{q(j'-j)}2^{2q(j-j_s)\gamma N} \times \\& 2^{2q(j'-j_s)(2\gamma N-m)} 2^{\delta q(j'-j)}>2^{qu}\}|^{\frac rp}\\
\lesssim&\sup_{j_s\in\mathbb{Z}}\sum_u2^{ur}|\{x:\sum_{j'\geq j_s}\widetilde{u}_{j',j_s}(x)^q2^{2mq(j'-j_s)}2^{j'q(\frac np-1)}>2^{qu}\}|^{\frac rp}.
\end{split}&
\end{flalign*}
Here we need $0<\delta<\frac n{p}-2+ 2m$.


\section{ The proof of theorem \ref{bili}(ii)}\label{sec7}



We only prove the boundedness of $B^{\epsilon,\epsilon'}_{l,l',l'',2}(u,v)$.

First, Assume $0<t2^{2j}\leq 1$. Denote $f^{\epsilon,2}_{j,k}(t)$ in this case by $f^{\epsilon,2,1}_{j,k}(t)$. Then we have $0<s2^{2j'}\leq 4$ and $ 0<s2^{2j''}\leq 1$. According to lemma \ref{bmm}, it implies that $ |v^{\epsilon''}_{j'',k''}(s)|\lesssim(s2^{2j''})^{-m'} 2^{(1-\frac{n}{2})j''}$. Thus 
\begin{align*}
|f^{\epsilon,2,1}_{j,k}(t)|\lesssim&\int^{t}_{0}\sum_{j':|j-j'|\leq2}\sum\limits_{j''\leq j'-3, \epsilon',\epsilon'', k', k''}\frac{2^{\frac n2 j''+j}}{(1+|2^{j''-j'}k'-k''|)^N(1+|k-2^{j-j'}k'|)^N}\times \\&|\widetilde{u}^{\epsilon'}_{j',k'}(s)| (s2^{2j''})^{-m'} 2^{(1-\frac{n}{2})j''}ds.
\end{align*}
Observe that $0\leq m'\leq \frac 12$. Since $N=2n+2$, we can get that 
\begin{align*}
|f^{\epsilon,2,1}_{j,k}(t)|\lesssim&\sum_{j':|j-j'|\leq2}\int^{t}_{0}\sum\limits_{ \epsilon', k'}\frac{2^{(1-2m')j'+j}|\widetilde{u}^{\epsilon'}_{j',k'}(s)| s^{-m'}}{(1+|k-2^{j-j'}k'|)^N} ds.
\end{align*}
Compare the estimation of $f^{\epsilon,2,1}_{j,k}(t)$ and $f^{\epsilon,1,1}_{j,k}(t)$, which get in Section \ref{sec6}. Suitable modification to the proof for $f^{\epsilon,1,1}_{j,k}(t)$ can show that 
\begin{align*}
f^{2,1}_{j,j_t}(x)=&\sup_{2^{-2j_t}\leq t<2^{2-2j_t}}2^{\frac{n}{2}j} \sum\limits_{(\epsilon,k)\in \Gamma}|f^{\epsilon,2,1}_{j,k}(t)|\chi(2^{j}x-k)\\
\lesssim  &   \sum\limits_{j':|j-j'|\leq2 } \sum_{j_s\geq j_t}2^{2(j'-j_s)(1-m')} M(\widetilde{u}_{j',j_s}).
\end{align*}
Just like in Section \ref{sec6}, we can using H\"older inequality to deal with the case of $r\geq p$. Here we only consider $r<p$.
 
Define $$A_{r,p}^{m',q,2,1} =\sup_{j_t\in\mathbb{Z}}\sum_u2^{ur}|\{x:\sum_{j< j_t}2^{2(j-j_t)m'q}2^{jq(\frac np-1)}(f^{2,1}_{j,j_t}(x))^q>2^{qu}\}|^{\frac rp}.$$
Substitute the estimation of $f^{2,1}_{j,j_t}$ into the definition above. After direct calculation we get
\begin{align*}
A_{r,p}^{m',q,2,1} \lesssim&\sup_{j_t\in\mathbb{Z}}\sum_u2^{ur}\sum_{j_s\geq j_t}|\{x:\sum_{j< j_t}\sum\limits_{j':|j-j'|\leq2 } 2^{2(j-j_t)m'q}2^{jq(\frac np-1)} 2^{2q(j'-j_s)(1-m')}\times \\& M(\widetilde{u}_{j',j_s})^q2^{\delta q(j_s-j_t)}>2^{qu}\}|^{\frac rp}.
\end{align*}
Here we set $0<\delta<2-4m'$.

It follows from $m'<\frac 1{2}$ that 
\begin{align*}
A_{r,p}^{m',q,1,1} 
\lesssim&\sup_{j_s\in\mathbb{Z}}\sum_u2^{ur}|\{x:\sum_{j'\leq j_s}\widetilde{u}_{j',j_s}(x)^q2^{2m'q(j'-j_s)}2^{j'q(\frac np-1)}>2^{qu}\}|^{\frac rp}\\=&\Vert \widetilde{u}\Vert_{{ ^{m'}_{m} \dot{F}}^{\frac{n}{p}-1, q}_{p, r}}.
\end{align*}

All that remains is to consider the case of $t2^{2j}>1$. Denote $f^{\epsilon,2}_{j,k}(t)$ by $f^{\epsilon,2,2}_{j,k}(t)$ in this case. Set $\displaystyle m_{j,s}=\left\{\begin{array}{l}m,\ s2^{2j}>1\\ m',\ 0<s2^{2j}\leq1\end{array}\right.$. According to lemmas \ref{B2} and \ref{bmm}, we have\begin{align*}
|f^{\epsilon,2,2}_{j,k}(t)|\lesssim&\sum_{j''\leq j-3}\sum\limits_{j':|j-j'|\leq2, \epsilon',\epsilon'', k', k''}\frac{2^{\frac {n}2 (j''-j')+(2-2m_{j',s})j'}(t2^{2j})^{\gamma N}}{(1+|2^{j''-j'}k'-k''|)^{-N}(1+|k-2^{j-j'}k'|)^{-N}}\times \\&\int^{t}_{0}e^{-c(t-s)2^{2j}}e^{\widetilde{c}(t^{\gamma}2^{2j\gamma}-s^{\gamma}2^{2j\gamma})}s^{-m_{j',s}} |\widetilde{v}^{\epsilon''}_{j'',k''}(s)|\max\{1,(s2^{2j''})^{\gamma N}\}ds.
\end{align*}
Without loss of generality, we may assume $j=j'$. Other cases can be proved by the same method as employed in $j=j'$. Using lemma \ref{HL max}, we get that
\begin{align*}
f^{2,2}_{j,j_t}(x)\lesssim&\sup_{2^{-2j_t}\leq t<2^{2-2j_t}}\sum_{j''\leq j-1}2^{(2-2m_{j,s})j}(t2^{2j})^{\gamma N}\times \\&\int^{t}_{0}M(\widetilde{v}_{j''})e^{-c(t-s)2^{2j}}e^{\widetilde{c}(t^{\gamma}2^{2j\gamma}-s^{\gamma}2^{2j\gamma})}s^{-m_{j,s}} \max\{1,(s2^{2j''})^{\gamma N}\}ds.
\end{align*}

Define $$A_{r,p}^{m,q,2,2} =\sup_{j_t\in\mathbb{Z}}\sum_u2^{ur}|\{x:\sum_{j\geq j_t}2^{2(j-j_t)mq}2^{jq(\frac np-1)}(f^{2,2}_{j,j_t}(x))^q>2^{qu}\}|^{\frac rp}.$$
Since $0<\gamma<\frac 1{2N}-\frac n{2pN}$, let $0<\delta<\min\{1-\frac np-2m',1-\frac np-2\gamma N\}$, H\"older inequality gives that
 \begin{align*}
\!\!\!A_{r,p}^{m,q,2,2} \lesssim&\sup_{j_t\in\mathbb{Z}}\sum_u2^{ur}|\{x:\sum_{j\geq j_t}2^{2(j-j_t)mq}2^{jq(\frac np-1)}\!\!\!\!\!\!\!\!\!\!\sup_{2^{-2j_t}\leq t<2^{2-2j_t}}\!\!\sum_{j''\leq j-1}2^{2q(1-m_{j,s})j+\delta q(j-j'')}(t2^{2j})^{q\gamma N} \\&(\int^{t}_{0}M(\widetilde{v}_{j''})e^{-c(t-s)2^{2j}}e^{\widetilde{c}(t^{\gamma}2^{2j\gamma}-s^{\gamma}2^{2j\gamma})}s^{-m_{j,s}} \max\{1,(s2^{2j''})^{\gamma N}\}ds)^q>2^{qu}\}|^{\frac rp}.
\end{align*}
 Now we need to decompose the right hand-side above. Write it as the sum of the following seven terms. Set $I_1=I_4=[0,2^{-2j}]$, $I_2=[2^{-2j},\frac t2]$, $I_3=I_7=[\frac t2,t]$, $I_5=[2^{-2j},2^{-2j''}]$, $I_6=[2^{-2j''},\frac t2]$, then
\begin{align*}
&A_{r,p}^{m,q,2,2}\\
\lesssim&\sum_{i=1,2,3}\sup_{j_t\in\mathbb{Z}}\sum_u2^{ur}|\{x:\sum_{j\geq j_t}2^{2(j-j_t)mq}2^{jq(\frac np-1)}\sup_{2^{-2j_t}\leq t<2^{2-2j_t}}\sum_{j''\leq j_t}2^{2q(1-m_{j,s})j+\delta q(j-j'')}\times \\&(t2^{2j})^{q\gamma N}(\int_{I_i}M(\widetilde{v}_{j''})e^{-c(t-s)2^{2j}}e^{\widetilde{c}(t^{\gamma}2^{2j\gamma}-s^{\gamma}2^{2j\gamma})}s^{-m_{j,s}} ds)^q>2^{qu}\}|^{\frac rp}+\\&\sum_{i=4,5,6,7}
\sup_{j_t\in\mathbb{Z}}\sum_u2^{ur}|\{x:\sum_{j\geq j_t}2^{2(j-j_t)mq}2^{jq(\frac np-1)}\!\!\!\!\!\!\!\!\!\!\sup_{2^{-2j_t}\leq t<2^{2-2j_t}}\!\!\sum_{j_t<j''\leq j-1}2^{2q(1-m_{j,s})j+\delta q(j-j'')}\times \\&(t2^{2j})^{q\gamma N}(\int_{I_i}M(\widetilde{v}_{j''})e^{-c(t-s)2^{2j}}e^{\widetilde{c}(t^{\gamma}2^{2j\gamma}-s^{\gamma}2^{2j\gamma})}s^{-m_{j,s}} \max\{1,(s2^{2j''})^{\gamma N}\}ds)^q>2^{qu}\}|^{\frac rp}\\=&\sum_{i=1,2,3}M_i+\sum_{i=4,5,6,7}M_i.
\end{align*}

Set $0<\delta'<2-4m'$. As to $M_1$, decompose the integral into dyadic interval and apply H\"older inequality. Direct calculations derive that there exists a positive constant $c'$ satisfying 
\begin{align*}
M_1\lesssim&\sup_{j_t\in\mathbb{Z}}\sum_u2^{ur}|\{x:\sum_{j\geq j_t}2^{2(j-j_t)mq}2^{jq(\frac np-1)}\sum_{j''\leq j_t}2^{2q(1-m')j+\delta q(j-j'')}2^{2q\gamma N(j-j_t)}\times \\&\sum_{j_s>j}2^{q\delta'(j_s-j)-2qj_s}M(\widetilde{v}_{j'',j_s})^qe^{-c'q2^{2(j-j_t)}}2^{2qm'j_s} >2^{qu}\}|^{\frac rp}.
\end{align*}
Put $0<\delta''<2q-4m'q-q\delta'$. Change the order of sum and we finally obtain
\begin{align*}
M_1\lesssim&\sup_{j_t\in\mathbb{Z}}\sum_u2^{ur}\sum_{j_s>j_t}|\{x:\!\!\!\!\sum_{j_t\leq j< j_s}\sum_{j''\leq j_t}2^{2(j-j_t)mq}2^{jq(\frac np-1)}2^{2q(1-m')j+\delta q(j-j'')}2^{2q\gamma N(j-j_t)} \\&2^{q\delta'(j_s-j)-2qj_s+\delta''(j_s-j_t)}M(\widetilde{v}_{j'',j_s})^qe^{-c'q2^{2(j-j_t)}}2^{2qm'j_s} >2^{qu}\}|^{\frac rp}\lesssim\Vert \widetilde{v}\Vert_{{ ^{m'}_{m} \dot{F}}^{\frac{n}{p}-1, q}_{p, r}}.
\end{align*}
Similarly, for $2\leq i\leq7$, we get
\begin{align*}
M_2\lesssim&\sup_{j_t\in\mathbb{Z}}\sum_u2^{ur}|\{x:\sum_{j\geq j_t}2^{2(j-j_t)mq}2^{jq(\frac np-1)}\sum_{j''\leq j_t}2^{2q(1-m)j+\delta q(j-j'')}2^{2q\gamma N(j-j_t)}\times \\&(\sum_{j_t\leq j_s\leq j}(j-j_t)^q2^{-2qj_s(1-m)}M(\widetilde{v}_{j'',j_s})^qe^{-c'q2^{2(j-j_t)}}>2^{qu}\}|^{\frac rp};\\
M_3\lesssim&\sup_{j_s\in\mathbb{Z}}\sum_u2^{ur}|\{x:\sum_{j\geq j_s}2^{2(j-j_s)mq}2^{jq(\frac np-1)}\sum_{j''\leq j_s}2^{2q(1-m)j+\delta q(j-j'')}2^{2q\gamma N(j-j_s)}\times \\&M(\widetilde{v}_{j'',j_s})^q2^{-2qj}2^{2mqj_s} >2^{qu}\}|^{\frac rp};\\
M_4\lesssim&\sup_{j_t\in\mathbb{Z}}\sum_u2^{ur}|\{x:\sum_{j\geq j_t}2^{2(j-j_t)mq}2^{jq(\frac np-1)}\sum_{j_t<j''\leq j-1}2^{2q(1-m')j+\delta q(j-j'')}2^{2q\gamma N(j-j_t)}\times \\&\sum_{j_s>j}2^{\delta' q(j_s-j)}M(\widetilde{v}_{j'',j_s})^qe^{-c'q2^{2(j-j_t)}}2^{2j_sm'q-2j_sq} >2^{qu}\}|^{\frac rp};\\
M_5\lesssim&\sup_{j_t\in\mathbb{Z}}\sum_u2^{ur}|\{x:\sum_{j\geq j_t}2^{2(j-j_t)mq}2^{jq(\frac np-1)}\sum_{j_t<j''\leq j-1}2^{2q(1-m)j+\delta q(j-j'')}2^{2q\gamma N(j-j_t)}\times \\&\sum_{j''< j_s\leq j}(j-j'')^qM(\widetilde{v}_{j'',j_s})^qe^{-c'q2^{2(j-j_t)}}2^{2qj_sm-2qj_s}>2^{qu}\}|^{\frac rp};\\
M_6\lesssim&\sup_{j_t\in\mathbb{Z}}\sum_u2^{ur}|\{x:\sum_{j\geq j_t}2^{2(j-j_t)mq}2^{jq(\frac np-1)}\sum_{j_t<j''\leq j-1}2^{2q(1-m)j+\delta q(j-j'')}2^{2q\gamma N(j-j_t)}\times \\&\sum_{j_t\leq j_s\leq j''}(j''-j_t)^qM(\widetilde{v}_{j'',j_s})^qe^{-c'q2^{2(j-j_t)}}2^{2qj_sm-2qj_s} 2^{2q\gamma N(j''-j_s)}ds>2^{qu}\}|^{\frac rp};\\
M_7\lesssim&\sup_{j_s\in\mathbb{Z}}\sum_u2^{ur}|\{x:\sum_{j\geq j_s}2^{2(j-j_s)mq}2^{jq(\frac np-1)}\sum_{j_s<j''\leq j-1}2^{2q(1-m)j+\delta q(j-j'')}2^{2q\gamma N(j-j_s)}\times \\&M(\widetilde{v}_{j'',j_s})^q2^{-2qj}2^{2qj_sm} 2^{2q\gamma N(j''-j_s)}>2^{qu}\}|^{\frac rp}.
\end{align*}
In addition, since that $0<\gamma< \frac {m}{2N}$. It's evident to see that
\begin{align*}
M_i\leq \Vert \widetilde{v}\Vert_{{ ^{m'}_{m} \dot{F}}^{\frac{n}{p}-1, q}_{p,r}}, \quad 2\leq i\leq 7
\end{align*}
holds.



{\bf This research did not receive any specific grant from funding agencies in the public, commercial, or not-for-profit sectors.}












\end{document}